\newtheorem*{corollary*}{Corollary}
\newtheorem*{remark*}{Remark}
\newtheorem{definition}{Definition}[section] 
\newtheorem{theorem}[definition]{Theorem}
\newtheorem{lemma}[definition]{Lemma}
\newtheorem{example}[definition]{Example}
\newtheorem{corollary}[definition]{Corollary}
\newtheorem{remark}[definition]{Remark}
\DeclareMathOperator{\hilb}{hilb}
\DeclareMathOperator{\poin}{poincar\acute{e}}
\newcommand{\dif}{\ensuremath{\mathrm{d}}}
\newcommand{\supcheck}{\ensuremath{\,\raisebox{-1ex}{$\textstyle\check{}\,$}}}
\newcommand{\lcheck}{\ensuremath{L^{\supcheck}}}
\newcommand{\newo}{\ensuremath{\mathcal{O}}}
\newcommand{\newp}{\ensuremath{\mathbb{P}}}
\newcommand{\newz}{\ensuremath{\mathbb{Z}}}
\newcommand{\newb}{\ensuremath{\mathcal{B}}}
\begin{document}

\begin{frontmatter}






\title{Hilbert-Kunz theory for nodal cubics, via sheaves}
\author{Paul Monsky}

\address{Brandeis University, Waltham MA  02454-9110, USA\\  monsky@brandeis.edu }

\begin{abstract}
Suppose $B=F[x,y,z]/h$ is the homogeneous coordinate ring of a characteristic $p$ degree $3$ irreducible plane curve $C$ with a node. Let $J$ be a homogeneous $(x,y,z)$-primary ideal and $n\rightarrow e_{n}$ be the Hilbert-Kunz function of $B$ with respect to $J$.

Let $q=p^{n}$. When $J=(x,y,z)$, Pardue (see \cite{3}) showed that $e_{n}=\frac{7}{3}q^{2}-\frac{1}{3}q-R$ where $R=\frac{5}{3}$ if $q\equiv 2 \pod{3}$, and is $1$ otherwise. We generalize this, showing that $e_{n}=\mu q^{2}+\alpha q-R$ where $R$ only depends on $q \mod 3$. We describe $\alpha$ and $R$ in terms of classification data for a vector bundle on $C$. Igor Burban \cite{4} provided a major tool in our proof by showing how pull-back by Frobenius affects the classification data of an indecomposable vector bundle over $C$. We are also indebted to him for pointing us towards \cite{5}, in which $h^{0}$ is described in terms of these classification data.

\end{abstract}
\maketitle

\end{frontmatter}


\section*{Introduction}

Let $h$ be a form of degree $>0$ in $A=F[x,y,z]$ where $F$ is algebraically closed of characteristic $p>0$. Suppose $J$ is a homogeneous ideal of $A$. If $q=p^{n}$, let $J^{[q]}$ be the ideal generated by all $u^{q}$, $u$ in $J$.  Let $e_{n}$ be the $F$-dimension of $A/(J^{[q]},h)$.

\subsection*{Problem: If $e_{0}<\infty$, how does $e_{n}$ depend on $n$?}

The problem was treated by elementary methods, when $J=(x,y,z)$ and degree $h$ is small, by several authors. In particular, Pardue in his thesis (see \cite{3} for an exposition) showed that when $h$ is an irreducible nodal cubic then $e_{n}$ is $\frac{7}{3}q^{2}-\frac{1}{3}q-\frac{5}{3}$ if $q\equiv 2\pod{3}$, and is $\frac{7}{3}q^{2}-\frac{1}{3}q-1$ otherwise.

For arbitrary $h$ and $J$, sheaf-theoretic methods were introduced by Brenner \cite{1} and Trivedi \cite{8}. They calculated $\mu=\lim_{n\rightarrow\infty}\frac{e_{n}}{q^{2}}$, showing that $\mu$ is rational. When $h$ has coefficients in a finite field and defines a smooth plane curve $C$, Brenner \cite{2} showed further that $\mu q^{2}-e_{n}$ is an eventually periodic function of $n$. In \cite{7}, the author returned to the case $J=(x,y,z)$, and adapted Brenner's method to treat all $h$ defining reduced irreducible $C$. (But now $\mu q^{2}$ must be replaced by something a bit more complicated.)

In the present paper we restrict our attention to nodal cubics but allow $J$ to be arbitrary. Using sheaf-theoretic methods as in \cite{7} we recover Pardue's result when $J=(x,y,z)$. For arbitrary $J$ we get a result nearly as precise. What allows us to get sharp results is the well-developed theory of vector bundles on nodal cubic curves. (See Igor Burban \cite{4} and the references therein.) We are indebted to Burban for pointing us towards this theory, and for the result essential to us that he derives in \cite{4}.

\section{A little sheaf theory}
\label{section1}

\begin{definition}
\label{def1.1}
If $M$ is a finitely generated $\newz$-graded $A=F[x,y,z]$ module, $\hilb (M) =\sum \dim (M_{d})T^{d}$ and $\poin(M)=(1-T)^{3}\hilb (M)$. (Note that $\poin(M)$ is in $\newz[T,T^{-1}]$.)
\end{definition}

Throughout the paper we adopt the notation of the introduction, with $h\in A$ a degree $3$ form defining a nodal $C\subset \newp ^{2}$, having desingularization $X=\newp^{1}$. Hartshorne \cite{5} is a good reference for what follows.

Even though $C$ is singular there is a good theory of torsion-free sheaves on $C$. One may define the degree of such a sheaf, all such sheaves are reflexive, and one has Riemann-Roch and Serre duality. In some ways $C$ is like an elliptic curve. For example, if $Y$ is rank $1$ torsion-free, $h^{0}(Y)=\deg Y$ if $\deg Y>0$, and is $0$ if $\deg Y<0$. When $\deg Y=0$, $h^{0}(Y)$ is $1$ if $Y$ is isomorphic to $O_{C}$ and is $0$ otherwise.

\begin{definition}
\label{def1.2}
$\poin(Y)=(1-T)^{3}\sum h^{0}(Y(n))T^{n}$, where $Y(n)$ is the twist of $Y$ by $O_{C}(n)$. (Riemann-Roch shows that $(1-T)^{-1}\poin(Y)$ is in $\newz[T,T^{-1}]$.)
\end{definition}

\begin{example}
\label{example1.3}
\begin{enumerate}\item[] 
\item[(a)] $\poin(O_{C})=(1-T)^{3}(1+3T+6T^{2}+9T^{3}+\cdots ) = 1-T^{3}$
\item[(b)] $\poin(\oplus O_{C}(-d_{i}))=(1-T^{3})\cdot\sum T^{d_{i}}$
\item[(c)] If $L$ has rank $1$ and degree $-n$, then:

\begin{eqnarray*}
(1-T)^{-1}\poin(L) &=& T^{\frac{n+2}{3}}(2+T)\text{ if } n\equiv 1\pod{3}\\
 &=& T^{\frac{n+1}{3}}(1+2T)\text{ if } n\equiv 2\pod{3}\\
 &=& T^{\frac{n}{3}}(1+T+T^{2})\text{ if } L\approx O_{C}\left(-\frac{n}{3}\right)\\
 &=& T^{\frac{n}{3}}(3T)\text{ otherwise }
\end{eqnarray*}
\end{enumerate}

\end{example}

\begin{lemma}
\label{lemma1.4}
Suppose $L$ and $M$ are rank $1$ torsion-free, that neither is isomorphic to any $O_{C}(k)$, and that $\deg M \le 1+\deg L$. Then if $0\rightarrow L\rightarrow U \rightarrow M\rightarrow 0$ is exact, $\poin(U)=\poin(L)+\poin(M)$.
\end{lemma}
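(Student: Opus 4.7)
The plan is to establish the stronger pointwise statement $h^{0}(U(n)) = h^{0}(L(n)) + h^{0}(M(n))$ for every integer $n$. Once this holds, multiplying by $T^{n}$, summing over $n$, and then by $(1-T)^{3}$ yields $\poin(U) = \poin(L) + \poin(M)$ immediately from Definition~\ref{def1.2}.

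Twisting the given short exact sequence by the line bundle $\newo_{C}(n)$ preserves exactness and gives a long exact cohomology sequence, so the desired additivity at level $n$ is equivalent to the vanishing of the connecting map $\delta_{n} : H^{0}(M(n)) \to H^{1}(L(n))$. It therefore suffices to show that, for every $n$, at least one of $H^{0}(M(n))$, $H^{1}(L(n))$ is zero.

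For $H^{0}$, the dichotomy recorded just before Definition~\ref{def1.2} says a rank $1$ torsion-free sheaf $Y$ on $C$ has $h^{0}(Y) = 0$ unless $\deg Y > 0$ or $Y \cong \newo_{C}$. Since $M(n) \cong \newo_{C}$ would force $M \cong \newo_{C}(-n)$, contradicting the hypothesis on $M$, $H^{0}(M(n)) \ne 0$ forces $\deg M + 3n > 0$. For $H^{1}$, Serre duality on the Gorenstein curve $C$ (with $\omega_{C} \cong \newo_{C}$ since $p_{a}(C) = 1$) gives $H^{1}(L(n)) \cong H^{0}(L(n)^{\vee})^{\vee}$. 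The dual $L(n)^{\vee}$ is again rank $1$ torsion-free, and it cannot be $\newo_{C}$ (else $L \cong \newo_{C}(-n)$); applying the same vanishing dichotomy to $L(n)^{\vee}$, $H^{1}(L(n)) \ne 0$ forces $\deg L(n)^{\vee} > 0$, i.e., $\deg L + 3n < 0$.

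If both conditions held simultaneously, then $3n$ would be an integer strictly between the integers $-\deg M$ and $-\deg L$. But the hypothesis $\deg M \le 1 + \deg L$ says $-\deg L \le -\deg M + 1$, so the open interval $(-\deg M, -\deg L)$ has length at most $1$ with integer endpoints and hence contains no integer. This contradiction gives $\delta_{n} = 0$ for every $n$, completing the argument. The main conceptual point, and the only place subtlety enters, is recognizing that the two cohomological vanishings combine into a single integer-gap problem for which the hypothesis $\deg M \le 1 + \deg L$ is exactly tight; the exclusions $L, M \not\cong \newo_{C}(k)$ are precisely what allow the simple rank $1$ dichotomy to be applied cleanly to both $M(n)$ and $L(n)^{\vee}$.
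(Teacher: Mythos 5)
Your proof is correct and is essentially the paper's argument: both rest on the observation that for each twist either $h^{0}(M(n))$ or $h^{1}(L(n))=h^{0}(L(n)^{\supcheck})$ vanishes, using the rank~$1$ dichotomy together with the exclusions $L,M\not\cong O_{C}(k)$ and the degree hypothesis. The paper merely organizes the same dichotomy as two cases on the sign of $\deg L$ rather than as your contradiction via the empty integer interval.
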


\begin{proof}
Since $\deg M(n)\le 1+\deg L(n)$ for each $n$, it's enough to show that $h^{0}(U)=h^{0}(L)+h^{0}(M)$. If $\deg L\ge 0$, $\deg \lcheck\le 0$ and $\lcheck$ is not isomorphic to $O_{C}$. So $h^{1}(L) = h^{0}(\lcheck)=0$, and we use the exact sequence of cohomology. If $\deg L < 0$, $\deg M \le 0$, and $M$ is not isomorphic to $O_{C}$. So $h^{0}(M)=0$, and the result follows.
\end{proof}

Now fix a homogeneous ideal $J$ of $A$ with $\dim A/(J,h) <\infty$, and forms $g_{1},\ldots , g_{s}$ generating $(J,h)/h$, with $\deg g_{i}=d_{i}$. Then the sheaf map\linebreak $\oplus O_{C}(-d_{i})\rightarrow O_{C}$ defined by the $g_{i}$ is onto. So if $W$ is the kernel of this map, $W$ is locally free of rank $s-1$ and degree $-3\sum d_{i}$.

\begin{lemma}
\label{lemma1.5}
\begin{enumerate}\item[] 
\item[(1)] $\poin\left(A/(J,h)\right)=(1-T)^{3}\left(1-\sum T^{d_{i}}\right)+\poin(W)$
\item[(2)] More generally, let $q=p^{n}$ and $W^{[q]}$ be the pull-back of $W$ by $\Phi^{n}$, where $\Phi : C\rightarrow C$ is the Frobenius map. Then:
$$\poin\left(A/(J^{[q]},h)\right) = (1-T^{3})\left(1-\sum T^{qd_{i}}\right)+\poin\left(W^{[q]}\right).$$
\end{enumerate}
\end{lemma}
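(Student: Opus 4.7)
The strategy is to reduce (1) to a Hilbert series computation for graded modules over $B = A/(h)$, identifying the module kernel of the surjection onto $(J,h)/(h)$ with the sheaf $W$; then to reduce (2) to (1) via the Frobenius pullback of the defining sequence for $W$.

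For (1), write $A/(J,h) = B/\bar J$ where $\bar J = (J,h)/(h)$. The graded surjection $\bigoplus B(-d_i) \twoheadrightarrow \bar J$ defined by the $g_i$'s has some graded kernel $K$, giving
\[
\hilb(B/\bar J) \;=\; \hilb(B)\Bigl(1 - \sum T^{d_i}\Bigr) + \hilb(K).
\]
Multiplying by $(1-T)^3$ and using $\poin(B) = 1 - T^3$ puts the right-hand side in the claimed form, provided I can show $(1-T)^3 \hilb(K) = \poin(W)$. The key comparison is $\dim K_n = h^0(W(n))$ for every integer $n$: I would first note that $B_n = H^0(O_C(n))$ for every $n \in \newz$, which follows from the sequence $0 \to O_{\newp^2}(n-3) \to O_{\newp^2}(n) \to O_C(n) \to 0$, the vanishing of $H^1$ on $\newp^2$, and the fact that both sides are zero for $n<0$. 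Consequently the map $\bigoplus H^0(O_C(n-d_i)) \to H^0(O_C(n))$ is literally the multiplication map $\bigoplus B_{n-d_i} \to B_n$, whose kernel is $K_n$; taking $H^0$ of the defining sequence for $W$ then identifies $H^0(W(n))$ with $K_n$.

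For (2), the additivity of the $q$-th power in characteristic $p$, together with $(J,h) = (g_1, \ldots, g_s, h)$ in $A$, gives $(J^{[q]}, h)/(h) = (g_1^q, \ldots, g_s^q)$ in $B$. Applying (1) with the $g_i^q$ as generators (of degrees $qd_i$) produces the desired formula, with $W$ replaced by the kernel $W'$ of $\bigoplus O_C(-qd_i) \xrightarrow{(g_i^q)} O_C$. I would then identify $W'$ with $W^{[q]} = \Phi^{n*}W$: Frobenius pullback sends $O_C(-d_i)$ to $O_C(-qd_i)$ and carries the map $(g_i)$ to $(g_i^q)$, so once the pulled back sequence is known to remain short exact, the identification is automatic.

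The delicate point, and the place I expect the main care is needed, is this last exactness: since $C$ is nodal, Frobenius is not flat at the node, so one cannot appeal to flatness to preserve the sequence. Instead, I would use the local freeness of $W$: because $O_C$ is locally free, the defining surjection $\bigoplus O_C(-d_i) \twoheadrightarrow O_C$ admits a Zariski-local section and the sequence splits locally, and Frobenius pullback preserves that local splitting, hence preserves global exactness.
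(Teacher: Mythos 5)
Your proof follows essentially the same route as the paper: both rest on taking global sections of the twisted sequences $0\to W(d)\to\oplus O_{C}(d-d_{i})\to O_{C}(d)$, identifying the cokernel in degree $d$ with $\left(A/(J,h)\right)_{d}$, and deducing (2) from (1) by replacing each $g_{i}$ with $g_{i}^{q}$. Your added care in checking that the Frobenius pullback of the defining sequence stays exact (via local splitting, since Frobenius is not flat at the node) fills in a point the paper leaves implicit, and your computation correctly produces the factor $(1-T^{3})$ in (1), consistent with (2) and Example~\ref{example1.3}, rather than the $(1-T)^{3}$ appearing in the statement, which is evidently a typo.
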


\begin{proof}
For each $d$ we have an exact sequence $0\rightarrow W(d)\rightarrow \oplus O_{C}(d-d_{i})\rightarrow O_{C}(d)$, giving a corresponding exact sequence on global sections. Since\linebreak $H^{0}\!\left(O_{C}(d)\right)$ identifies with $(A/h)_{d}$, the cokernel of the map\linebreak $H^{0}\left(\oplus O_{C}(d\! -\! d_{i})\right)\!\rightarrow H^{0}\left(O_{C}(d)\right)$ identifies with $\left(A/(J,h)\right)_{d}$. It follows that $\dim \left(A/(J,h)\right)_{d}= h^{0}(O_{C}(d))-h^{0}\left(\oplus O_{C}(d-d_{i})\right) + h^{0}(W(d))$. Multiplying by $T^{d}$, summing over $d$, and using (a) and (b) of Example \ref{example1.3}, we get (1). Furthermore, replacing each $g_{i}$ by $g_{i}^{q}$ replaces $J$ by $J^{[q]}$ and $W$ by $W^{[q]}$. So (2) is a consequence of (1).
\end{proof}

\begin{remark}
\label{remark1.6}
Lemma \ref{lemma1.5} allows us to replace the problem of the dependence of $\poin\left(A/\left(J^{[q]},h\right)\right)$ on $q$ by a more geometric question: if $W$ is a vector bundle on $C$, how does $\poin\left(W^{[q]}\right)$ vary with $q$? A generalization of Lemma \ref{lemma1.5} is key to the sheaf-theoretic approach to Hilbert-Kunz theory taken by Brenner and Trivedi.
\end{remark}

For the rest of this section we take $J=(x,y,z)$, $g_{1}=x$, $g_{2}=y$, $g_{3}=z$ so that the $W$ of Lemma \ref{lemma1.5} has rank $2$ and degree $-9$. We'll use sheaf theory on $C$ to give another proof of Pardue's results.

\begin{lemma}
\label{lemma1.7}
$W$ maps onto a rank $1$ degree $-4$ torsion-free sheaf, $M$, whose stalk at the node is the maximal ideal $m$ of the local ring $\newo$.
\end{lemma}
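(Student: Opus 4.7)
The plan is to produce $M$ as the image of a concrete surjection. Since the problem is invariant under linear changes of coordinates on $\newp^2$, which replace $W$ by an isomorphic sheaf and leave $J=(x,y,z)$ fixed as an ideal, I may assume the node $P$ of $C$ lies at $[0:0:1]$. Then $z$ is a local generator of $\newo_C(1)$ at $P$, and $u=x/z$, $v=y/z$ generate the maximal ideal $m$ of $\newo=\newo_{C,P}$.

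I take $M$ to be the image of the composition
\[ W \hookrightarrow \newo_C(-1)^{\oplus 3} \twoheadrightarrow \newo_C(-1), \]
where the second map is projection onto the third (``$z$-slot'') summand. As a nonzero subsheaf of the torsion-free rank-$1$ sheaf $\newo_C(-1)$, $M$ is automatically torsion-free of rank $1$, and $W\twoheadrightarrow M$ is surjective by construction.

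The heart of the argument is a local stalk computation. At $P$, after trivializing $\newo_C(-1)$ by $z^{-1}$ and $\newo_C(1)$ by $z$, the map $(a,b,c)\mapsto ax+by+cz$ becomes $(\alpha,\beta,\gamma)\mapsto \alpha u+\beta v+\gamma$ on $\newo^{3}$, so $W_P$ is the free $\newo$-module generated by $(1,0,-u)$ and $(0,1,-v)$. Projection to the third coordinate sends these to $-u$ and $-v$, which generate $m$; hence $M_P = m\cdot \newo_C(-1)_P \cong m$ as $\newo$-module. At any $Q\in C$ with $Q\ne P$, the parallel computation---using $x^{-1}$ or $y^{-1}$ to trivialize $\newo_C(-1)$ when $z(Q)=0$---shows that $W_Q$ contains a local generator whose third-coordinate image is a unit, so $M_Q = \newo_C(-1)_Q$.

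Consequently the inclusion $M\subset \newo_C(-1)$ is an isomorphism off $P$ with cokernel $\newo/m$ (of $F$-length $1$) supported at $P$, so $\deg M = \deg \newo_C(-1) - 1 = -4$. I do not foresee a genuine obstacle: the only delicate point is keeping track of the trivialization at the (at most three) points of $C\cap\{z=0\}$, but the local computation is essentially the same in every case.
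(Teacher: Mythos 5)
Your proof is correct, and it is self-contained where the paper's is not: the paper simply twists by $O_{C}(1)$ to reduce to the kernel of $O_{C}^{\oplus 3}\rightarrow O_{C}(1)$, quotes Lemma 7.1 of \cite{7} for the existence of a rank $1$ degree $-1$ quotient with stalk $m$ at the node, and twists back. You instead exhibit $M$ explicitly as the image of $W$ under projection onto the $z$-slot of $O_{C}(-1)^{\oplus 3}$ after normalizing the node to $[0:0:1]$ (a harmless reduction, since a linear change of coordinates preserves the ideal $(x,y,z)$ and replaces $W$ by an isomorphic bundle). Your stalk computations check out: the generators $(1,0,-u)$, $(0,1,-v)$ of $W_{P}$ project onto $m\cdot O_{C}(-1)_{P}\cong m$, while at every other point of $C$ the kernel contains an element with unit third coordinate, so $M$ coincides with $O_{C}(-1)$ off the node; the cokernel of $M\subset O_{C}(-1)$ therefore has length $1$ and $\deg M=-3-1=-4$ as required. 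What your route buys is independence from \cite{7}; what the paper's buys is brevity. Since the later use of $M$ (in Lemma \ref{lemma1.8}) depends only on its degree and its stalk at the node, your construction substitutes cleanly.
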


\begin{proof}
$W(1)$ identifies with the kernel of the map $O_{C}\oplus O_{C}\oplus O_{C} \rightarrow O_{C}(1)$ given by $x$, $y$ and $z$. By Lemma 7.1 of \cite{7}, $W(1)$ maps onto a rank $1$ degree $-1$ torsion-free sheaf whose stalk at the node is $m$, and we twist by $O_{C}(-1)$.
\end{proof}

\begin{lemma}
\label{lemma1.8}
Suppose $q=p^{n}$. Let $M$ be the sheaf of Lemma \ref{lemma1.7}. Pull $M$ back by $\Phi^{n} : C\rightarrow C$ and quotient out the maximal torsion subsheaf to get a rank $1$ torsion-free sheaf $M_{n}$. Then $\deg M_{n} = -5q+1$.
\end{lemma}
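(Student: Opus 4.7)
My plan is to identify $M$ as the pushforward of a line bundle on the normalization and compute $M_n$ via base change.

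First, since $M$'s stalk at the node $p$ (by Lemma~\ref{lemma1.7}) is the maximal ideal $m \subset \newo_{C,p} = F[[u,v]]/(uv)$, which is isomorphic as an $\newo_{C,p}$-module to $\tilde{\newo}_p = F[[u]] \times F[[v]]$ via $(a,b) \mapsto (ua, vb)$, $M$ is singular at $p$. Letting $\pi : X \to C$ denote the normalization with $X = \newp^1$ and $\pi^{-1}(p) = \{p_1, p_2\}$, the classification of rank $1$ torsion-free sheaves on a nodal curve (see \cite{5}) gives $M \cong \pi_{*} L_0$ for a line bundle $L_0$ on $X$. Affineness of $\pi$ gives $\chi_C(\pi_{*} L_0) = \chi_X(L_0) = \deg L_0 + 1$, and since $\chi = \deg$ on rank $1$ torsion-free sheaves (as $p_a(C) = 1$), one obtains $\deg_C \pi_{*} L_0 = \deg_X L_0 + 1$; thus $\deg_X L_0 = -5$ and $L_0 \cong \newo_X(-5)$.

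The commutative square $\pi \circ \Phi_X = \Phi_C \circ \pi$ yields a canonical base-change map $\beta : \Phi_C^{n*}(\pi_{*} L_0) \to \pi_{*}(\Phi_X^{n*} L_0)$. Away from $p$, $\pi$ is an isomorphism and $\beta$ is one too. At $p$, after trivializing $L_0$ near $p_1, p_2$, $\beta$ is identified with the $\newo_{C,p}$-linear map
\begin{equation*}
\tilde{\newo}_p \otimes_{\newo_{C,p},\, \Phi^n} \newo_{C,p} \to \tilde{\newo}_p, \qquad y \otimes a \mapsto y^q a.
\end{equation*}
The images of $1 \otimes 1$ and $e \otimes 1$ (for the idempotent $e = (1,0) \in \tilde{\newo}_p$) are $1$ and $e^q = e$, which generate $\tilde{\newo}_p$ over $\newo_{C,p}$; so $\beta$ is surjective.

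The target of $\beta$ is torsion-free (pushforward of a line bundle on the smooth $X$), so $\beta$ factors through $M_n = \Phi_C^{n*}(\pi_{*} L_0)/\mathrm{torsion}$, giving a surjection $M_n \twoheadrightarrow \pi_{*}(\Phi_X^{n*} L_0)$. A surjection between rank $1$ torsion-free sheaves on a curve has torsion kernel; since $M_n$ is torsion-free, the kernel vanishes, and $M_n \cong \pi_{*}(\Phi_X^{n*} L_0)$. On $\newp^1$ Frobenius multiplies line bundle degrees by $q$, so $\Phi_X^{n*} L_0 \cong \newo_X(-5q)$, and applying the first paragraph's formula gives $\deg_C M_n = -5q + 1$.

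The main obstacle is Step~2: verifying the surjectivity of $\beta$ at the node, since $\pi$ is not flat there and the base-change map need not be an isomorphism. Once surjectivity (and hence the identification $M_n \cong \pi_{*}(\Phi_X^{n*} L_0)$) is in hand, the degree computation is immediate from $\deg_C \pi_{*} = \deg_X + 1$ and Frobenius acting as multiplication by $q$ on $\mathrm{Pic}(\newp^1)$.
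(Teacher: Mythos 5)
Your proof is correct, but it follows a genuinely different route from the paper's. The paper's proof is essentially a citation: Theorem 2.8 of \cite{7} gives, for a rank $1$ torsion-free sheaf whose stalk at the node is $m$, the formula $\deg M_{n}=\mathrm{constant}\cdot q-\dim(\newo/m^{[q]})$; the local computation $\dim F[[x,y]]/(xy,x^{q},y^{q})=2q-1$ together with the normalization $\deg M_{0}=\deg M=-4$ then forces $\deg M_{n}=-5q+1$. You instead identify $M$ structurally as $\pi_{*}\newo_{X}(-5)$ via the classification of rank $1$ torsion-free sheaves on a nodal curve, and prove $M_{n}\cong\pi_{*}\newo_{X}(-5q)$ by checking surjectivity of the base-change map at the node --- the one point that genuinely needs care, since $\pi$ is not flat there, and which you handle correctly by observing that $1$ and the idempotent $e$ are fixed by the $q$-power map and generate $\tilde{\newo}_{p}$ over $\newo_{C,p}$. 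Your approach is self-contained modulo the classification of rank $1$ torsion-free sheaves, whereas the paper outsources the degree-drop formula to \cite{7}; it also yields more than the degree, since the identification $M_{n}\cong\pi_{*}\newo_{X}(-5q)$ shows at once that $M_{n}$ is not locally free at the node, a fact the paper must establish separately (and by a less direct contradiction argument) in part (2) of Lemma \ref{lemma1.9}.
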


\begin{proof}
Theorem 2.8 of \cite{7} together with Lemma \ref{lemma1.7} above shows that $\deg M_{n}=\mathrm{constant}\cdot q- \dim\left(\newo/m^{[q]}\right)$. Passing to the completion we find that $\dim\left(\newo/m^{[q]}\right)\linebreak =\dim\left(F[[x,y]]/(xy,x^{q},y^{q})\right)=2q-1$. So $\deg (M_{n})=(\mathrm{constant})\cdot q + 1$. Since $\deg (M) = -4$, the constant is $-5$.
\end{proof}

\begin{lemma}
\label{lemma1.9}
Let $L_{n}$ be the kernel of the obvious map $W^{[q]}\rightarrow M_{n}$. Then:
\begin{enumerate}
\item[(1)] There is an exact sequence $0\rightarrow L_{n}\rightarrow W^{[q]}\rightarrow M_{n}\rightarrow 0$ with $\deg M_{n}=-5q+1$, $\deg L_{n}=-4q-1$.
\item[(2)] Neither $L_{n}$ nor $M_{n}$ is free at the node.
\item[(3)] $\poin\left(W^{[q]}\right)=\poin\left(L_{n}\right)+\poin\left(M_{n}\right)$.
\end{enumerate}
\end{lemma}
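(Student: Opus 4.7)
The plan is to treat (1), (2), and (3) in order. Part (1) is a right-exactness argument with degree arithmetic; part (2) is the heart, requiring a local computation at the node; part (3) is a direct appeal to Lemma \ref{lemma1.4}.

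For (1), I would exhibit $W^{[q]}\to M_n$ as the composition $\Phi^{n*}W\twoheadrightarrow\Phi^{n*}M\twoheadrightarrow M_n$; the first arrow is surjective because $\Phi^{n*}$ is right exact and $W\twoheadrightarrow M$ is surjective (Lemma \ref{lemma1.7}), while the second is the definition of $M_n$ as the torsion-free quotient. Then $0\to L_n\to W^{[q]}\to M_n\to 0$ is exact with $L_n$ torsion-free of rank $1$ (as a subsheaf of the locally free $W^{[q]}$). Since Frobenius pullback multiplies degree by $q$, $\deg W^{[q]}=-9q$, and Lemma \ref{lemma1.8} gives $\deg L_n=-9q-(-5q+1)=-4q-1$.

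For (2), I would work in the completion $\widehat{\newo}\cong F[[x,y]]/(xy)$ at the node. By Lemma \ref{lemma1.7}, the stalk of $M$ there is $m=(x,y)$, which admits the presentation $\widehat{\newo}^{2}\twoheadrightarrow m$, $(a,b)\mapsto ax+by$, whose kernel is generated by $ye_1$ and $xe_2$. Pulling back by $\Phi^n$ raises each relation to its $q$-th power, so $(\Phi^{n*}M)^{\wedge}_{\mathrm{node}}\cong\widehat{\newo}^{2}/(y^{q}e_1,x^{q}e_2)$. Sending $e_1\mapsto x^q$, $e_2\mapsto y^q$ gives a surjection onto the ideal $I=(x^q,y^q)\subset\widehat{\newo}$, and a direct calculation in the $F$-basis of $\widehat{\newo}$ shows its kernel is exactly the torsion submodule; hence $(M_n)^{\wedge}_{\mathrm{node}}\cong I$. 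To see $I$ is not principal, I would pass to the normalization $\widehat{\newo}\hookrightarrow F[[x]]\times F[[y]]$: any generator of $I$ in $\widehat{\newo}$ must have the form $\alpha=u_{1}x^{q}+u_{2}y^{q}$ with $u_{i}\in F^{\times}$, and comparing images of $x^q=\alpha\beta$ in the two factors forces a contradiction on the constant term of $\beta$. So $(M_n)^{\wedge}_{\mathrm{node}}\cong m$, the unique non-free rank-$1$ torsion-free $\widehat{\newo}$-module. For $L_n$, localize and complete the sequence of (1) to obtain $0\to(L_n)^{\wedge}_{\mathrm{node}}\to\widehat{\newo}^{2}\to m\to 0$; if $(L_n)^{\wedge}_{\mathrm{node}}$ were free, splicing with $0\to m\to\widehat{\newo}\to F\to 0$ would yield a finite free resolution of the residue field, forcing $\widehat{\newo}$ to be regular by the Auslander--Buchsbaum--Serre criterion, contradicting the nodal singularity.

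Finally, for (3), apply Lemma \ref{lemma1.4} with $L=L_n$ and $M=M_n$: both are rank-$1$ torsion-free, neither is isomorphic to any $O_{C}(k)$ (which would be locally free, contradicting (2)), and $\deg M_n-\deg L_n=(-5q+1)-(-4q-1)=-q+2\le 1$ since $q\ge 1$. The main obstacle throughout is (2): one must carefully unwind the interplay between Frobenius pullback and killing torsion, identifying $(M_n)^{\wedge}_{\mathrm{node}}$ with the non-principal ideal $(x^q,y^q)$; once this is done, the non-freeness of $L_n$ and the Poincar\'e identity fall out formally.
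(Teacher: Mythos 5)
Your proposal is correct, and parts (1) and (3) match the paper's argument (degree additivity for (1); Lemma \ref{lemma1.4} plus the inequality $\deg M_{n}-\deg L_{n}=2-q\le 1$ for (3)). For part (2), however, you take a genuinely different route. The paper's proof is global and indirect: it first shows $L_{n}$ is locally free iff $M_{n}$ is (by dualizing the exact sequence), then assumes both are locally free, notes this forces $q>1$, pulls the sequence back by one more Frobenius to get $0\rightarrow L_{n}^{\prime}\rightarrow W^{[pq]}\rightarrow M_{n}^{\prime}\rightarrow 0$, and derives a contradiction purely from degree estimates: $\deg L_{n+1}>\deg M_{n}^{\prime}$ forces $L_{n+1}\subset L_{n}^{\prime}$, yet $\deg L_{n+1}>\deg L_{n}^{\prime}$, which is impossible for a rank-$1$ torsion-free subsheaf. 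Your proof instead works locally at the node: you compute the completed stalk of $\Phi^{n*}M$ from the presentation of $m$ over $F[[x,y]]/(xy)$, identify the torsion-free quotient with the non-principal ideal $(x^{q},y^{q})\cong m$, and then rule out freeness of $L_{n}$ via Auslander--Buchsbaum--Serre. Both arguments are sound. Yours is more explicit and yields extra information (the actual isomorphism class of the stalk of $M_{n}$ at the node, essentially re-deriving the local input behind Lemma \ref{lemma1.8}), at the cost of some careful bookkeeping about how Frobenius pullback, killing torsion, and completion interact; the paper's is shorter, stays entirely at the level of degrees already computed, and fits the sheaf-theoretic style of the rest of the section, but it is non-constructive and leans on the self-referential structure of the family $\{L_{n}\}$ under Frobenius.
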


\begin{proof}
Since $W^{[q]}$ and $M_{n}$ have degrees $-9q$ and $-5q+1$ we get (1).  If $M_{n}$ is locally free, the exact sequence (1) shows that $L_{n}$ is also. Since we have an exact sequence $0\rightarrow M_{n}^{\supcheck}\rightarrow\left(W^{[q]}\right)^{\supcheck}\rightarrow L_{n}^{\supcheck}\rightarrow 0$ we see conversely that if $L_{n}$ is locally free then so is $M_{n}^{\supcheck\supcheck}=M_{n}$. Suppose now that $L_{n}$ and $M_{n}$ are locally free. Then $q>1$. Let $L_{n}^{\prime}$ and $M_{n}^{\prime}$ be the pull-backs of $L_{n}$ and $M_{n}$ by Frobenius so that we have an exact sequence $0\rightarrow L_{n}^{\prime}\rightarrow W^{[pq]}\rightarrow M_{n}^{\prime}\rightarrow 0$. Then $\deg L_{n+1}-\deg M_{n}^{\prime}= (-4pq-1)-p(-5q+1)=pq-p-1>0$. So the map $L_{n+1}\rightarrow W^{[pq]}/L_{n}^{\prime}=M_{n}^{\prime}$ is the zero-map, and $L_{n+1}\subset L_{n}^{\prime}$. But $\deg L_{n+1}>\deg L_{n}^{\prime}$, and this contradiction establishes (2). Finally, $\deg M_{n}-\deg L_{n}=2-q\le 1$. Combining this with (2) and Lemma \ref{lemma1.4} we get (3).
\end{proof}

\begin{corollary}
\label{corollary1.10}
\begin{eqnarray*}
(1-T)^{-1}\poin\left(W^{[q]}\right) &=& T^{\frac{4q+2}{3}}(1+2T)+T^{\frac{5q+1}{3}}(2+T)\text{ if } q\equiv 1\pod{3}\\
 &=& T^{\frac{4q+1}{3}}(3T)+T^{\frac{5q+2}{3}}(3)\text{ if } q\equiv 2\pod{3}\\
 &=& T^{\frac{4q}{3}}(2T+T^{2})+T^{\frac{5q}{3}}(1+2T)\text{ if } q\equiv 0\pod{3}
\end{eqnarray*}
\end{corollary}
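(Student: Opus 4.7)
The approach is to combine the short exact sequence decomposition $\poin(W^{[q]}) = \poin(L_n) + \poin(M_n)$ from Lemma \ref{lemma1.9}(3) with the rank-$1$ Poincar\'e series formulas in Example \ref{example1.3}(c). Since $L_n$ and $M_n$ are rank $1$ torsion-free sheaves of known degrees $-(4q+1)$ and $-(5q-1)$ by Lemma \ref{lemma1.9}(1), computing each summand reduces to reading off the correct subcase of Example \ref{example1.3}(c) according to the residue of the degree modulo $3$, and then adding.

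First I would tabulate the residues of $4q+1$ and $5q-1$ modulo $3$ in each of the three cases. A quick check gives: for $q\equiv 0\pmod 3$ one has $4q+1\equiv 1$ and $5q-1\equiv 2$; for $q\equiv 1\pmod 3$, $4q+1\equiv 2$ and $5q-1\equiv 1$; and for $q\equiv 2\pmod 3$, both $4q+1$ and $5q-1$ are $\equiv 0$. Plugging the corresponding cases of Example \ref{example1.3}(c) into the two summands and simplifying the exponents (e.g.\ writing $T^{(4q+3)/3}(2+T)=T^{4q/3}(2T+T^{2})$ when $q\equiv 0$) produces precisely the three displayed expressions in the statement.

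The only step that is not pure bookkeeping is the case $q\equiv 2\pmod 3$, where both $\deg L_n$ and $\deg M_n$ are divisible by $3$. Example \ref{example1.3}(c) then has two sub-branches, distinguished by whether the rank-$1$ sheaf is isomorphic to some $O_C(k)$ (giving a $1+T+T^{2}$ factor) or not (giving $3T$). This is exactly where Lemma \ref{lemma1.9}(2) is used: since $L_n$ and $M_n$ fail to be locally free at the node, and every $O_C(k)$ is locally free everywhere on $C$, neither can be isomorphic to any $O_C(k)$. Hence the ``otherwise'' branch applies to both summands, giving the $T^{(4q+1)/3}(3T)$ and $T^{(5q+2)/3}(3)$ contributions in the $q\equiv 2$ line. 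The only conceptual input beyond Lemma \ref{lemma1.9} is thus this local-freeness obstruction; the rest is a finite arithmetic verification.
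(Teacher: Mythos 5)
Your proposal is correct and follows essentially the same route as the paper: decompose via Lemma \ref{lemma1.9}(3), read off each rank-$1$ summand from Example \ref{example1.3}(c) according to the residue of the degree mod $3$, and invoke Lemma \ref{lemma1.9}(2) in the $q\equiv 2\pmod 3$ case to rule out the $O_{C}(k)$ branch. The residue computations and exponent simplifications all check out.
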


\begin{proof}
Suppose first that $q\equiv 1\pod{3}$. Since $4q+1\equiv 2\pod{3}$, $(1-T)^{-1}\poin (L_{n})\linebreak =T^{\frac{4q+2}{3}}(1+2T)$ by Example \ref{example1.3}~(c). Similarly, since $5q-1\equiv 1\pod{3}$, $(1-T)^{-1}\poin(M_{n}) $ is $T^{\frac{5q+1}{3}}(2+T)$. Now use (3) of Lemma \ref{lemma1.9}. The cases $q\equiv 2\pod{3}$ and $q\equiv 0\pod{3}$ are handled similarly. (When $q\equiv 2\pod{3}$ we use the fact that neither $L_{n}$ nor $M_{n}$ is locally free.)

Now let $e_{n}=\dim \left(A/\left(J^{[q]},h\right)\right)$. Pardue's formula for $e_{n}$ is easily derived from Corollary \ref{corollary1.10}. Let $u_{n}=(1-T)^{-1}\poin \left(W^{[q]}\right)$. By Lemma \ref{lemma1.5}, $(1-T)^{2}\hilb A/\left(J^{[q]},h\right)=\left(1+T+T^{2}\right)\left(1-3T^{q}\right)+u_{n}$. Applying $\left(\frac{\dif}{\dif T}\right)^{2}$, dividing by $2$, and evaluating at $T=1$ we find that $e_{n}=\frac{1}{2}\left(u_{n}^{\prime\prime}(1)-(9q^{2}+9q+4)\right)$. Suppose that $q\equiv 1\pod{3}$. Then Corollary \ref{corollary1.10} shows that $u_{n}^{\prime\prime}(1)=\left(\frac{4q+2}{3}\right)(4q+3)+\left(\frac{5q+1}{3}\right)(5q)=\frac{41}{3}q^{2}+\frac{25}{3}q+2$. When $q\equiv 2\pod{3}$, $u_{n}^{\prime\prime}(1)=\left(\frac{4q+1}{3}\right)(4q+4)+\left(\frac{5q+2}{3}\right)(5q-1)=\frac{41}{3}q^{2}+\frac{25}{3}q+\frac{2}{3}$. And when $q\equiv 0\pod{3}$, $u_{n}^{\prime\prime}(1)=\left(\frac{4q+3}{3}\right)(4q+2)+\left(\frac{5q}{3}\right)(5q+1)=\frac{41}{3}q^{2}+\frac{25}{3}q+2$. So $e_{n}=\frac{7}{3}q^{2}-\frac{1}{3}q-\frac{5}{3}$ if $q\equiv 2\pod{3}$, and is $\frac{7}{3}q^{2}-\frac{1}{3}q-1$ otherwise.
\end{proof}

\section{Elements of $\newz[T,T^{-1}]$ attached to cycles}
\label{section2}

In Corollary \ref{corollary1.10} we calculated all the $(1-T)^{-1}\poin \left(W^{[q]}\right)$ for a certain rank $2$ bundle, $W$. In this section we develop some combinatorial machinery that we'll use later to get similar results for arbitrary $W$.

\begin{definition}
\label{def2.1}
Suppose $r>0$. A cycle (of length $r$) is an ordered $r$-tuple of integers, defined up to cyclic permutation. If $a$ is a cycle, $a(k)$ is the cycle obtained from $a$ by adding $3k$ to each cycle entry.
\end{definition}

\begin{definition}
\label{def2.2}
\begin{itemize}\item[]
\item[] $\gamma_{1}(a)$ is the number of entries of $a$ that are $\ge 0$.
\item[] $\gamma_{2}(a)=\sum \max (a_{i},0)$, where $a_{i}$ runs over the entries of $a$.
\end{itemize}
\end{definition}

Note that $\gamma_{1}(a)+\gamma_{2}(a)=\sum\max(a_{i}+1,0)$ where $a_{i}$ runs over the entries of $a$. We now compute $(1-T)^{2}\sum\gamma_{2}\left(a(k)\right)T^{k}$. This is evidently a sum of contributions, one for each entry in $a$. An entry of 2 gives a contribution of $(1-T)^{2}(2+5T+8T^{2}+\cdots)=2+T$; similarly an entry of $1$ (resp.\ $0$) gives a contribution of $(1+2T)$ (resp.\ $3T$). If follows easily that an entry of $-n$ gives a contribution of $T^{n+2}\,{3}(2+T)$, $T^{n+1}\,{3}(1+2T)$ or $T^{n}{3}\,(3T)$ according as $n\equiv 1$, $2$ or $0\mod{3}$. We may express this in a slightly different way:

\begin{lemma}
\label{lemma2.3}
Suppose the distinct entries in the cycle $a$ are $-n_{i}$ with $-n_{i}$ appearing $r_{i}$ times in the cycle. Then $P_{2}(a)=(1-T)^{2}\sum\gamma_{2}\left(a(k)\right)T^{k}$ lies in $\newz [T,T^{-1}]$, and is the sum of contributions, one from each $n_{i}$. The contribution from $n_{i}$ is:
\begin{eqnarray*}
\hspace{1.2in}T^{\frac{n_{i}+2}{3}}(2r_{i}+r_{i}T)\qquad&\mathrm{ if }& n_{i}\equiv 1\pod{3}\\
T^{\frac{n_{i}+1}{3}}(r_{i}+2r_{i}T)\qquad&\mathrm{ if }& n_{i}\equiv 2\pod{3}\\
T^{\frac{n_{i}}{3}}(3r_{i}T)\qquad&\mathrm{ if }& n_{i}\equiv 0\pod{3}
\end{eqnarray*}%
\end{lemma}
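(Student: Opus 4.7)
The plan is to reduce the formal Laurent series $\sum_{k}\gamma_{2}\!\left(a(k)\right)T^{k}$ to a sum of single-entry contributions, then compute each contribution in closed form. Since $\gamma_{2}$ is additive over entries of a cycle, the sum decomposes as a sum, one term for each occurrence of each distinct entry $-n_{i}$; grouping equal entries together pulls out the factor $r_{i}$. After this reduction, what must be evaluated for each $n_{i}$ is the series $(1-T)^{2}\sum_{k}\max(-n_{i}+3k,0)\,T^{k}$.

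First I would observe that $\max(-n_{i}+3k,0)=0$ for $3k\le n_{i}$, so the inner series is bounded below and multiplying by $(1-T)^{2}$ will indeed land in $\newz[T,T^{-1}]$. Writing $k=m+j$ where $3m$ is the largest multiple of $3$ that is $\le n_i$ (so $j\ge 1$ gives the nonzero terms), the contribution becomes
\[
T^{m}\!\sum_{j\ge 1}(3j-\varepsilon)\,T^{j},
\qquad \varepsilon\in\{0,1,2\},
\]
where $\varepsilon = n_i - 3m \in\{0,1,2\}$ is the residue of $n_{i}$ modulo $3$. Using the standard identities
\[
\sum_{j\ge 1}T^{j}=\frac{T}{1-T},\qquad \sum_{j\ge 1}jT^{j}=\frac{T}{(1-T)^{2}},
\]
the inner sum equals $(3T-\varepsilon T(1-T))/(1-T)^{2}$. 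Multiplying by $(1-T)^{2}$ clears the denominator, leaving a Laurent polynomial $T^{m}(3T-\varepsilon T(1-T))$ which simplifies to $T^{m}(3T)$, $T^{m}(2T+T^{2})$, or $T^{m}(T+2T^{2})$ for $\varepsilon=0,1,2$ respectively.

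Finally I would rewrite the exponent $m$ in terms of $n_{i}$: $m=n_{i}/3$ when $\varepsilon=0$, $m=(n_{i}-1)/3$ when $\varepsilon=1$ (so $m+1=(n_{i}+2)/3$), and $m=(n_{i}-2)/3$ when $\varepsilon=2$ (so $m+1=(n_{i}+1)/3$). Matching these against the three displayed contributions and inserting the multiplicity $r_{i}$ yields exactly the formulas of the lemma. The only real step is the geometric-series computation; there is no serious obstacle, and the lemma follows by summing over the distinct values $n_{i}$.
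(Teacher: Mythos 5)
Your proposal is correct and follows essentially the same route as the paper: both decompose $\sum_k\gamma_2(a(k))T^k$ into per-entry contributions (the additivity of $\gamma_2$) and then evaluate the resulting arithmetic–geometric series; the paper does the cases of entries $2$, $1$, $0$ explicitly and shifts, while you compute the general entry $-n_i$ in one closed form, which is a cosmetic difference. Your bookkeeping with $m=\lfloor n_i/3\rfloor$ and $\varepsilon=n_i-3m$ checks out and reproduces the three displayed formulas exactly.
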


Observe next that the cycle $a$ gives rise to an integer-valued function of period $r$ on $\newz$, defined up to translation. We say that the cycle is ``aperiodic'' if this function has no period $<r$. For the rest of this section we assume that $r>1$ and that $a$ is aperiodic.

\begin{definition}
\label{def2.4}
A ``bloc'', $b$, of $a$ with entry $N$ consists of consecutive entries of $a$ each of which is $N$, with both the cycle entry preceding the first bloc entry and the cycle entry following the last bloc entry unequal to $N$. The length, $l(b)$, of $b$ is the number of entries in $b$.
\end{definition}

Since $r>1$ and the cycle is aperiodic, there are at least $2$ blocs in $a$. The blocs of $a$ appear in cyclic order and fill out $a$; their lengths sum to $r$.

\begin{definition}
\label{def2.5}
Let $b$ be a bloc with entry $N$.
\begin{enumerate}
\item[(1)] If the blocs just before and just after $b$ have entries $<N$, $b$ is locally maximal and $\varepsilon (b) =1$.
\item[(2)] If the blocs just before and just after $b$ have entries $>N$, $b$ is locally minimal and $\varepsilon (b) =-1$.
\item[(3)] If $b$ is neither locally maximal nor locally minimal, $\varepsilon (b) =0$.
\end{enumerate}
\end{definition}

\begin{remark}
\label{remark2.6}
Between any $2$ locally maximal blocs there is a locally minimal bloc, and between any $2$ locally minimals there is a locally maximal. Since there are at least $2$ blocs, $\sum\varepsilon (b) =0$.
\end{remark}

\begin{definition}
\label{def2.7}
\begin{enumerate}
\item[]
\item[(1)] A bloc $b$ with entry $N$ is positive if $N\ge 0$.
\item[(2)] Suppose $b$ is positive. $\varepsilon^{*}(b)=\varepsilon(b)$ unless $N=0$ and $b$ is locally maximal. In this case we set $\varepsilon^{*}(b)$ equal to $0$.
\item[(3)] $\gamma_{3}(a)=\sum\varepsilon^{*}(b)$, the sum ranging over the positive blocs of $a$.
\end{enumerate}
\end{definition}

We now compute $(1-T)^{2}\sum\gamma_{3}\left(a(k)\right)T^{k}$. The sum is evidently a sum of contributions, one from each bloc of $a$. Consider first a bloc with entry $2$ or $1$. The contribution of this bloc is $\varepsilon(b)(1-T)^{2}\cdot(1+T+T^{2}+\cdots)=\varepsilon(b)(1-T)$. Next consider a bloc with entry $0$. If the block is locally minimal it gives a contribution of $(-1)(1-T)^{2}(1+T+T^{2}+\cdots)=\varepsilon(b)(1-T)$, while if it is locally maximal, the contribution is $(1)(1-T)^{2}(T+T^{2}+T^{3}+\cdots)=T-T^{2}=\varepsilon(b)\cdot(1-T)-(1-T)^{2}$.

More generally, a locally maximal bloc with entry $-n$, $n\equiv 0\pod{3}$, provides a contribution of $\varepsilon(b)T^{\frac{n}{3}}(1-T)-T^{\frac{n}{3}}(1-T)^{2}$, while in all other cases (i.~e.\ when $n\equiv\pm 1\pod{3}$ or the bloc is not locally maximal) the contribution is $\varepsilon(b)T^{\frac{n+2}{3}}(1-T)$, $\varepsilon(b)T^{\frac{n+1}{3}}(1-T)$, or $\varepsilon(b)T^{\frac{n}{3}}(1-T)$ according as $n\equiv 1$, $2$ or $0 \mod 3$. We'll express this result in a different way.

\begin{definition}
\label{def2.8}
Suppose the distinct entries of $a$ are the integers $-n_{i}$. Then:
\begin{enumerate}
\item[(1)] $s_{i}$ is $\sum\varepsilon(b)$, the sum extending over all the blocs of $a$ with entry $-n_{i}$
\item[(2)] If $n_{i}\equiv 0\pod{3}$, $B_{i}$ is the number of locally maximal blocs with entry $-n_{i}$.
\end{enumerate}
\end{definition}

The discussion preceding the definition shows:

\begin{theorem}
\label{theorem2.9}
$P_{3}(a)=(1-T)^{2}\sum\gamma_{3}\left(a(k)\right)T^{k}$ is a sum of contributions, one from each $n_{i}$. The contribution from $n_{i}$ is:
\begin{eqnarray*}
\hspace{1.5in}T^{\frac{n_{i}+2}{3}}(s_{i}-s_{i}T)\qquad&\mathrm{ if }& n_{i}\equiv 1\pod{3}\\
T^{\frac{n_{i}+1}{3}}(s_{i}-s_{i}T)\qquad&\mathrm{ if }& n_{i}\equiv 2\pod{3}\\
T^{\frac{n_{i}}{3}}\left(s_{i}-s_{i}T-B_{i}(1-T)^{2}\right)\qquad&\mathrm{ if }& n_{i}\equiv 0\pod{3}
\end{eqnarray*}%
\end{theorem}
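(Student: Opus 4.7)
The plan is to compute $(1-T)^{2}\sum_{k}\gamma_{3}(a(k))T^{k}$ as a sum of bloc-level contributions, then regroup those contributions according to the distinct entry values $-n_{i}$. The starting observation is that the translation $a\mapsto a(k)$ preserves the bloc decomposition and the local-max/local-min labels: a bloc $b$ of $a$ with entry $-n$ corresponds to a bloc of $a(k)$ with entry $-n+3k$ and the same $\varepsilon(b)$. Consequently $b$ contributes a summand to $\gamma_{3}(a(k))$ precisely when $-n+3k\ge 0$, and this summand equals $\varepsilon(b)$ in every case except the single ``boundary'' situation where $b$ is locally maximal and $-n+3k=0$ (which forces $n\equiv 0\pmod 3$ and $k=n/3$), where $\varepsilon^{*}$ is $0$ rather than $\varepsilon(b)=1$.

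Given this, the contribution of a single bloc $b$ of entry $-n$ to $\sum_{k}\gamma_{3}(a(k))T^{k}$ is the geometric tail $\varepsilon(b)T^{k_{0}}/(1-T)$ with $k_{0}=\lceil n/3\rceil$, corrected by subtracting $T^{n/3}$ in the exceptional case. Multiplying by $(1-T)^{2}$ collapses the main piece to $\varepsilon(b)T^{k_{0}}(1-T)$ and the correction to $-T^{n/3}(1-T)^{2}$. Writing $k_{0}$ as $(n+2)/3$, $(n+1)/3$, or $n/3$ according to $n\bmod 3$, and then summing all bloc contributions that share a common entry $-n_{i}$, turns $\sum\varepsilon(b)$ into $s_{i}$ and accumulates $B_{i}$ copies of the correction in the $n_{i}\equiv 0$ case, yielding exactly the three displayed expressions.

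The only delicate point is the boundary contribution at $-n+3k=0$, where $\varepsilon^{*}$ departs from $\varepsilon$ for locally maximal blocs: tracking it carefully is what produces the $-B_{i}(1-T)^{2}$ term in the $n_{i}\equiv 0\pmod 3$ case. Aperiodicity of $a$ and the assumption $r>1$ are used only implicitly, to ensure that the bloc decomposition is nontrivial and that each $\varepsilon(b)$ is well-defined; they play no further role in the generating-function manipulation.
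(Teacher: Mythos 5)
Your proof is correct and takes essentially the same route as the paper: the paper also decomposes $P_{3}(a)$ into per-bloc geometric-series contributions, isolates the single correction $-T^{n/3}(1-T)^{2}$ arising from locally maximal blocs whose entry becomes exactly $0$ (forcing $n\equiv 0\pmod 3$), and then regroups by the distinct entries to produce $s_{i}$ and $B_{i}$. Nothing is missing.
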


We next derive an alternative description of $\gamma_{1}(a)+\gamma_{3}(a)$ in terms of ``positive parts of $a$''.

\begin{definition}
\label{def2.10}
A positive part, $p$, of $a$ consists of consecutive entries of $a$ all of which are $\ge 0$; if $a$ has a negative entry we further require that the entry of $a$ preceding the first entry of $p$ and the entry of $a$ following the last entry of $p$ are $<0$. (Note that any positive part of $a$ is a union of consecutive positive blocs.)
\end{definition}

\begin{definition}
\label{def2.11}
\begin{enumerate}
\item[] 
\item[(1)] $\theta(p) = l(p)$ if $p$ consists of a single bloc of zeroes.
\item[(2)] $\theta(p) = l(p)$ if $l(p)=r$.
\item[(3)] In all other cases, $\theta(p)=1+l(p)$.
\end{enumerate}
\end{definition}

\begin{definition}
\label{def2.12}
$\theta(a)=\sum\theta(p)$, the sum extending over the positive parts of $a$.
\end{definition}

\begin{lemma}
\label{lemma2.13}
If $p$ is a positive part of $a$, $\theta(p)=l(p)+\sum\varepsilon^{*}(b)$, the sum extending over the blocs in $p$.
\end{lemma}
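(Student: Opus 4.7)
I would split on the three cases of Definition \ref{def2.11} and evaluate the sum $S(p) := \sum_{b \in p}\varepsilon^{*}(b)$, showing it equals $\theta(p)-l(p)$, which is $0$ in cases (1), (2) and $1$ in case (3).

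Cases (1) and (2) are quick. If $p$ is a single bloc $b$ of zeroes, then since $r>1$ and $a$ is aperiodic, $a$ is not identically zero, so $p\ne a$ and the blocs flanking $p$ have negative entries. Hence $b$ is locally maximal with entry $0$, so $\varepsilon^{*}(b)=0$ and $S(p)=0$. If $l(p)=r$, then $p=a$ and every entry of $a$ is $\ge 0$; no bloc can then be locally maximal with entry $0$ (that would require a neighbor with negative entry), so $\varepsilon^{*}=\varepsilon$ on all blocs, and Remark \ref{remark2.6} gives $S(p)=0$.

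The content is in case (3). Label the blocs of $p$ as $b_{1},\ldots ,b_{k}$ with entries $N_{1},\ldots ,N_{k}\ge 0$. Because $p$ is a proper positive part, the blocs immediately before and after $p$ have entries $N_{0},N_{k+1}<0$. Set $d_{i}=\mathrm{sign}(N_{i+1}-N_{i})\in\{+,-\}$ for $0\le i\le k$ (these are well-defined since adjacent blocs have distinct entries, and negative $<$ non-negative); then $d_{0}=+$ and $d_{k}=-$. One checks directly that, for $1\le i\le k$, $b_{i}$ is locally maximal iff $d_{i-1}=+$ and $d_{i}=-$, and locally minimal iff $d_{i-1}=-$ and $d_{i}=+$. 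Decomposing the word $d_{0}d_{1}\cdots d_{k}$ into maximal runs of identical symbols, which alternate and start with $+$ and end with $-$, one sees the number of $+\to -$ descents exceeds the number of $-\to +$ ascents by exactly one. Hence $\sum_{i=1}^{k}\varepsilon(b_{i})=1$. Moreover, in case (3) no bloc in $p$ can be locally maximal with entry $0$: such a bloc would need both neighbors to have negative entries, so it would be alone in $p$, putting us in case (1). Therefore $\varepsilon^{*}=\varepsilon$ on every bloc of $p$, and $S(p)=1$.

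The main obstacle is the combinatorial transition-count at the heart of case (3); once the extended sequence $N_{0},\ldots ,N_{k+1}$ is set up, the run-decomposition argument is clean, but care is needed to handle the edge case $k=1$ and to exclude locally-maximal zero blocs correctly via aperiodicity. The rest is routine use of Remark \ref{remark2.6} and the definitions.
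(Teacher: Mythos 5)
Your proof is correct and takes essentially the same route as the paper: the same three-case split on Definition \ref{def2.11}, with cases (1) and (2) handled just as in the text, and your descent count on the sign word $d_{0}d_{1}\cdots d_{k}$ is simply an explicit rendering of the paper's argument that the first and last blocs in $p$ with $\varepsilon\ne 0$ are locally maximal and that maxima and minima alternate (Remark \ref{remark2.6}). No gaps.
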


\begin{proof}
If $p$ contains a bloc with $\varepsilon^{*}\ne\varepsilon$, then since this bloc is locally maximal with entry $0$ it is the only bloc in $p$ and we use (1) of Definition \ref{def2.11}. So we may assume that $\varepsilon^{*}=\varepsilon$ for each bloc in $p$. If $l(p)=r$, $\sum\varepsilon^{*}(b)=\sum\varepsilon(b)$, which is $0$ by Remark \ref{remark2.6}, and we use (2) of Definition \ref{def2.11}.  Suppose finally that $l(p)<r$. There is at least one bloc in $p$ with $\varepsilon\ne 0$. The first and last blocs appearing in $p$ with $\varepsilon\ne 0$ are evidently locally maximal. The first sentence of Remark \ref{remark2.6} then shows that $\sum\varepsilon(b)$, the sum running over the blocs contained in $p$, is $1$. Definition \ref{def2.11}, (3), now gives the result.
\end{proof}

Summing the result of Lemma \ref{lemma2.13} over the positive parts of $a$ we find:

\begin{corollary}
\label{corollary2.14}
$\theta(a)=\gamma_{1}(a)+\gamma_{3}(a)$.
\end{corollary}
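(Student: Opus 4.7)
The plan is to derive Corollary \ref{corollary2.14} as a direct aggregation of Lemma \ref{lemma2.13} over the positive parts of $a$. Since Lemma \ref{lemma2.13} already handles the per-part analysis (including the three case split in Definition \ref{def2.11}), what remains is bookkeeping: checking that the two contributions assemble correctly into $\gamma_{1}(a)$ and $\gamma_{3}(a)$.

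First I would write
\[
\theta(a) \;=\; \sum_{p} \theta(p) \;=\; \sum_{p}\Bigl( l(p) + \sum_{b\subset p} \varepsilon^{*}(b)\Bigr)
\;=\; \sum_{p} l(p) \;+\; \sum_{p}\sum_{b\subset p}\varepsilon^{*}(b),
\]
using Lemma \ref{lemma2.13} for the second equality. The first obligation is to identify $\sum_{p} l(p)$ with $\gamma_{1}(a)$. By Definition \ref{def2.10}, the positive parts of $a$ are precisely the maximal runs of consecutive non-negative entries, so they partition the set of non-negative entries of $a$. Hence $\sum_{p} l(p)$ counts exactly the non-negative entries, which is $\gamma_{1}(a)$ by Definition \ref{def2.2}.

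The second obligation is to identify $\sum_{p}\sum_{b\subset p}\varepsilon^{*}(b)$ with $\gamma_{3}(a)$. By the parenthetical remark in Definition \ref{def2.10}, each positive bloc of $a$ lies in exactly one positive part of $a$, and conversely every bloc of a positive part has entry $\ge 0$ and is therefore a positive bloc. Thus the double sum reindexes as a single sum of $\varepsilon^{*}(b)$ over all positive blocs, which by Definition \ref{def2.7} is exactly $\gamma_{3}(a)$.

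There is no real obstacle here; the only point requiring any care is confirming that the three clauses of Definition \ref{def2.11} really cover every positive part exactly once, which is exactly what Lemma \ref{lemma2.13} established (through the case split on whether $p$ is a lone bloc of zeroes, fills all of $a$, or is bounded by negative entries while containing at least one non-zero bloc). Once that is granted, combining the two equalities above yields $\theta(a)=\gamma_{1}(a)+\gamma_{3}(a)$, completing the proof.
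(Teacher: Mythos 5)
Your proof is correct and is exactly the paper's argument: the paper's entire proof of Corollary \ref{corollary2.14} is the single sentence ``Summing the result of Lemma \ref{lemma2.13} over the positive parts of $a$,'' and you have simply spelled out the two bookkeeping identifications ($\sum_{p} l(p)=\gamma_{1}(a)$ and the reindexing of $\sum_{p}\sum_{b\subset p}\varepsilon^{*}(b)$ as $\gamma_{3}(a)$) that the paper leaves implicit.
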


\begin{theorem}
\label{theorem2.15}
Let $\gamma_{4}(a)=\left(\sum\max(a_{i}+1,0)\right)-\theta(a)$ with $\theta(a)$ as in Definition \ref{def2.12}. Let $P_{4}(a)$ be $(1-T)^{2}\cdot\sum\gamma_{4}\left(a(k)\right)T^{k}$. Then $P_{4}(a)$ is a sum of contributions, one from each $n_{i}$, where the $-n_{i}$ are the distinct entries of $a$. In the notation of Lemma \ref{lemma2.3} and Definition \ref{def2.8}, the contribution from $n_{i}$ is:
\begin{eqnarray*}
\hspace{.9in}T^{\frac{n_{i}+2}{3}}((2r_{i}-s_{i})+(r_{i}+s_{i})T)\qquad&\mathrm{ if }& n_{i}\equiv 1\pod{3}\\
T^{\frac{n_{i}+1}{3}}((r_{i}-s_{i})+(2r_{i}+s_{i})T)\qquad&\mathrm{ if }& n_{i}\equiv 2\pod{3}\\
T^{\frac{n_{i}}{3}}\left(-s_{i}+(3r_{i}+s_{i})T+B_{i}(1-T)^{2}\right)\qquad&\mathrm{ if }& n_{i}\equiv 0\pod{3}
\end{eqnarray*}%
\end{theorem}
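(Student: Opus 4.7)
\medskip

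\noindent\textbf{Proof plan.} The plan is to reduce Theorem \ref{theorem2.15} to a straightforward subtraction of the already-established formulas in Lemma \ref{lemma2.3} and Theorem \ref{theorem2.9}, using Corollary \ref{corollary2.14} as the bridge. The key algebraic observation is that
\[
\gamma_{4}(a)=\Bigl(\sum\max(a_{i}+1,0)\Bigr)-\theta(a)=\bigl(\gamma_{1}(a)+\gamma_{2}(a)\bigr)-\bigl(\gamma_{1}(a)+\gamma_{3}(a)\bigr)=\gamma_{2}(a)-\gamma_{3}(a),
\]
where the first bracketed identity is the remark immediately following Definition \ref{def2.2}, and the second is Corollary \ref{corollary2.14}. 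Since the map $a\mapsto a(k)$ is defined entry-wise by adding $3k$ and preserves the bloc structure up to the obvious shifts, the same identity holds after replacing $a$ by $a(k)$ for every $k$.

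Multiplying by $T^{k}$ and summing over $k\in\newz$, followed by multiplying by $(1-T)^{2}$, I obtain $P_{4}(a)=P_{2}(a)-P_{3}(a)$ as elements of $\newz[T,T^{-1}]$. Both $P_{2}(a)$ and $P_{3}(a)$ have already been decomposed as sums of one contribution per distinct entry $-n_{i}$, and those contributions share the common prefactor $T^{(n_{i}+2)/3}$, $T^{(n_{i}+1)/3}$, or $T^{n_{i}/3}$ depending on $n_{i}\bmod 3$. The subtraction can thus be performed term-by-term within each residue class.

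The verification is then mechanical. For $n_{i}\equiv 1\pmod 3$, the difference $(2r_{i}+r_{i}T)-(s_{i}-s_{i}T)$ gives $(2r_{i}-s_{i})+(r_{i}+s_{i})T$. For $n_{i}\equiv 2\pmod 3$, the difference $(r_{i}+2r_{i}T)-(s_{i}-s_{i}T)$ gives $(r_{i}-s_{i})+(2r_{i}+s_{i})T$. For $n_{i}\equiv 0\pmod 3$, the difference $3r_{i}T-\bigl(s_{i}-s_{i}T-B_{i}(1-T)^{2}\bigr)$ gives $-s_{i}+(3r_{i}+s_{i})T+B_{i}(1-T)^{2}$. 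These match the three displayed contributions, completing the proof.

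I do not anticipate any real obstacle: once one has committed to writing $\gamma_{4}=\gamma_{2}-\gamma_{3}$ via Corollary \ref{corollary2.14}, the rest is bookkeeping. The only point that required genuine work is Corollary \ref{corollary2.14}, whose proof via Lemma \ref{lemma2.13} (handling the three cases of $\theta(p)$ in Definition \ref{def2.11}) has already been carried out.
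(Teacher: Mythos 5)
Your proposal is correct and follows exactly the paper's own argument: the identity $\gamma_{4}=(\gamma_{1}+\gamma_{2})-(\gamma_{1}+\gamma_{3})=\gamma_{2}-\gamma_{3}$ obtained from the sentence following Definition \ref{def2.2} and Corollary \ref{corollary2.14}, hence $P_{4}(a)=P_{2}(a)-P_{3}(a)$, followed by termwise subtraction of the contributions in Lemma \ref{lemma2.3} and Theorem \ref{theorem2.9}. Your explicit case-by-case verification of the subtraction is accurate and merely spells out what the paper leaves to the reader.
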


\begin{proof}
Combining Corollary \ref{corollary2.14} with the sentence following Definition \ref{def2.2} we find that $\gamma_{4}=(\gamma_{1}+\gamma_{2})-(\gamma_{1}+\gamma_{3})=\gamma_{2}-\gamma_{3}$. Applying this to $a(k)$, multiplying by $T^{k}$ and summing over $k$ we find that $P_{4}(a)=P_{2}(a)-P_{3}(a)$. Lemma \ref{lemma2.3} and Theorem \ref{theorem2.9} conclude the proof.
\end{proof}

\section{Results for arbitrary $W$ and $J$}
\label{section3}

A locally free sheaf of rank $>0$ is ``indecomposable'' if it is not a direct sum of two subsheaves of rank $>0$. indecomposable locally free $W$ on the nodal cubic $C$ have been classified --- see Burban \cite{4} and the references given there.  I'll summarize results from the classification.

\begin{enumerate}
\item[(1)] Suppose $r>0$, $a$ is an an aperiodic cycle of length $r$, $m\ge 1$ and $\lambda$ is in $F^{*}$.  One may attach to the triple $a, m, \lambda$ an indecomposable locally free sheaf $W=\newb(a,m,\lambda)$.
\item[(2)] The pull-back of $W$ to $X=\newp^{1}$ is the direct sum of the $\left(O_{X}(a_{i})\right)^{m}$ where the entries of $a$ are the $a_{i}$. In particular, the rank of $W$ is $mr$, and the degree is $m\sum a_{i}$.
\item[(3)] If $W=\newb(a,m,\lambda)$, then $W(k)$ is isomorphic to $\newb(a(k),m,\lambda)$ with $a(k)$ as in Definition \ref{def2.1}.
\item[(4)] When $F$ is algebraically closed (as it is throughout this paper) every indecomposable locally free sheaf on $C$ is isomorphic to some $\newb(a,m,\lambda)$.
\end{enumerate}

In Theorem 2.2 of \cite{6}, Drozd, Greuel and Kashuba give a formula for $h^{0}(W)$ when $W=\newb(a,m,\lambda)$. (As we're dealing with a nodal cubic rather than a cycle of projective lines, we take the $s$ in the statement of that theorem to be $1$.) In particular they show:

\begin{theorem}
\label{theorem3.1}
Suppose $W=\newb(a,m,\lambda)$ with $r>1$. Then in the notation of our section \ref{section2}, $h^{0}(W)=m\cdot\left(\left(\sum\max(a_{i}+1,0)\right)-\theta(a)\right)=m(\gamma_{4}(a))$.
\end{theorem}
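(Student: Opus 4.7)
The plan is to deduce the theorem essentially by citation: Theorem 2.2 of Drozd-Greuel-Kashuba \cite{6} already expresses $h^{0}\left(\newb(a,m,\lambda)\right)$ as an explicit combinatorial quantity depending on $a,m,\lambda$. All that is left is to specialize to the nodal-cubic case (by setting their parameter $s=1$) and to recognize their combinatorial count as $m\left(\left(\sum\max(a_{i}+1,0)\right)-\theta(a)\right)$. In particular one checks that the $\lambda\in F^{*}$ dependence drops out of their formula, which is reasonable since twisting by a scalar at the node is an automorphism of the category of triples used to build $\newb(a,m,\lambda)$.

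To match the two expressions I would first handle the ``naive'' contribution $m\sum\max(a_{i}+1,0)$. Let $\pi\colon X=\newp^{1}\to C$ be normalization. By (2) in the list summarizing the classification, $\pi^{*}W$ is $\bigoplus_{i}\left(O_{X}(a_{i})\right)^{m}$, whose space of global sections has dimension $m\sum\max(a_{i}+1,0)$. The global sections of $W$ itself are precisely those sections of $\pi^{*}W$ whose values at the two preimages of the node satisfy the gluing condition defining $W$; so $h^{0}(W)$ equals $m\sum\max(a_{i}+1,0)$ minus the (total) dimension of the gluing constraints. The DGK formula organizes these constraints according to the cyclic combinatorics of $a$.

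The core of the plan is to verify that, constraint by constraint, the dimension of DGK's gluing defect equals $m\,\theta(a)$. The gluing at the node links the branch corresponding to the $i$th pullback summand to the branch corresponding to the $(i+1)$st via a scalar coming from $\lambda$; a constraint is only imposed on the $i$th node-value when both the $i$th and $(i+1)$st summands admit nonzero sections, i.e.\ both $a_{i}\ge 0$ and $a_{i+1}\ge 0$ (modulo the degenerate case $a_{i}=0$, where only a one-dimensional space of values is available). Reading this off, the constraints from a positive part $p$ of $a$ contribute $l(p)+1$ gluing conditions in the generic case (two negative entries bracket $p$, so both ``endpoint'' gluings with the adjacent negative-entry summands are vacuous and the $l(p)-1$ internal gluings plus the two bracket evaluations give $l(p)+1$), but only $l(p)$ in the two exceptional cases: when $p$ is a single bloc of zeroes (since $H^{0}\left(O_{X}(0)\right)$ is one-dimensional, one of the two endpoint evaluations is automatic), and when $l(p)=r$ (the cyclic closure identifies the first and last endpoint conditions, dropping the count by one). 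This is exactly the case distinction of Definition \ref{def2.11}, so summing $\theta(p)$ over the positive parts of $a$ reproduces DGK's defect, and multiplying by $m$ gives $m\,\theta(a)$.

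The main obstacle is precisely this last bookkeeping of the degenerate cases; once the ``zero-bloc'' and ``full-cycle'' cases are handled correctly, the identification $h^{0}(W)=m\left(\sum\max(a_{i}+1,0)-\theta(a)\right)=m\gamma_{4}(a)$ follows immediately from the cited DGK formula. Everything else (the pullback computation, the independence from $\lambda$, and the per-positive-part decomposition of the gluing data) is routine from the structure theory recalled at the start of Section \ref{section3}.
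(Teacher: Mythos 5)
Your proposal matches the paper's own treatment: the paper's entire ``proof'' of Theorem \ref{theorem3.1} consists of citing Theorem 2.2 of Drozd--Greuel--Kashuba with their parameter $s$ set to $1$ and asserting that their count equals $m\gamma_{4}(a)$, which is exactly your plan of deduction by citation. The node-gluing bookkeeping you add to identify their formula with $m\left(\left(\sum\max(a_{i}+1,0)\right)-\theta(a)\right)$ goes beyond what the paper records, and --- modulo slightly loose wording (the two ``endpoint'' gluings are not vacuous but degenerate to evaluation conditions, the rank drop for a bloc of zeroes comes from a dependency among the $l(p)+1$ conditions rather than one being ``automatic'', and the $m>1$ Jordan-block case is only waved at) --- it is a correct account of why the three cases of Definition \ref{def2.11} arise.
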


\begin{corollary}
\label{corollary3.2}
Situation as in Theorem \ref{theorem3.1}. Then $(1-T)^{-1}\poin(W)=m(1-T)^{2}\sum\gamma_{4}(a(k))T^{k}$.
\end{corollary}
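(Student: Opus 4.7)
The plan is to simply unwind the definitions and substitute. By Definition \ref{def1.2},
\[
(1-T)^{-1}\poin(W) \;=\; (1-T)^{2}\sum_{k} h^{0}\bigl(W(k)\bigr)\,T^{k},
\]
so the whole task reduces to computing $h^{0}(W(k))$ for every $k$ in terms of $\gamma_{4}$.

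Next I would invoke item (3) of the classification summary, which identifies $W(k)$ with $\newb(a(k),m,\lambda)$. Since $a$ is aperiodic of length $r>1$, so is $a(k)$ (the translation $a\mapsto a(k)$ only adds $3k$ to each entry and preserves both the length and the aperiodicity). Thus Theorem \ref{theorem3.1} applies to each $W(k)$, giving
\[
h^{0}\bigl(W(k)\bigr) \;=\; m\,\gamma_{4}\bigl(a(k)\bigr).
\]

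Substituting this into the formula above yields
\[
(1-T)^{-1}\poin(W) \;=\; m\,(1-T)^{2}\sum_{k}\gamma_{4}\bigl(a(k)\bigr)T^{k},
\]
which is the claim. The only nontrivial ingredient is Theorem \ref{theorem3.1} (Drozd--Greuel--Kashuba), and there is no real obstacle in the corollary itself beyond checking that aperiodicity and the hypothesis $r>1$ are preserved under the shift $a \mapsto a(k)$; this is immediate from Definition \ref{def2.1}.
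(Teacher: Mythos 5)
Your proof is correct and is exactly the argument the paper intends (the paper omits it as immediate): unwind Definition \ref{def1.2}, use item (3) of the classification to identify $W(k)$ with $\newb(a(k),m,\lambda)$, and apply Theorem \ref{theorem3.1} term by term. Your added remark that the shift $a\mapsto a(k)$ preserves length and aperiodicity is the right detail to check and is indeed trivial.
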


Applying Theorem \ref{theorem2.15} we find:

\begin{theorem}
\label{theorem3.3}
Situation as in Theorem \ref{theorem3.1}.  Suppose the distinct entries in $a$ are $-n_{i}$. Then $(1-T)^{-1}\poin(W)$ is the sum of the following contributions, one from each $n_{i}$:
\begin{eqnarray*}
\hspace{.6in}T^{\frac{n_{i}+2}{3}}\left((2mr_{i}-ms_{i})+(mr_{i}+ms_{i})T\right)\qquad&\mathrm{ if }& n_{i}\equiv 1\pod{3}\\
T^{\frac{n_{i}+1}{3}}\left((mr_{i}-ms_{i})+(2mr_{i}+ms_{i})T\right)\qquad&\mathrm{ if }& n_{i}\equiv 2\pod{3}\\
T^{\frac{n_{i}}{3}}\left(-ms_{i}+(3mr_{i}+ms_{i})T+mB_{i}(1-T)^{2}\right)\qquad&\mathrm{ if }& n_{i}\equiv 0\pod{3}
\end{eqnarray*}%

where $r_{i}$ is the number of times $-n_{i}$ appears in $a$, and $s_{i}$ and $B_{i}$ are obtained from $a$ as in Definition \ref{def2.8}.
\end{theorem}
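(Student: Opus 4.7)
The plan is to recognize this theorem as a direct corollary of the work already in place: Corollary \ref{corollary3.2} equates $(1-T)^{-1}\poin(W)$ with $m \cdot P_{4}(a)$ (in the notation of Theorem \ref{theorem2.15}), and Theorem \ref{theorem2.15} gives the explicit decomposition of $P_{4}(a)$ as a sum of contributions, one from each distinct $-n_{i}$ appearing in $a$. So the entire argument is a substitution.

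Concretely, I would first quote Corollary \ref{corollary3.2} to write
\[
(1-T)^{-1}\poin(W) \;=\; m(1-T)^{2}\sum_{k}\gamma_{4}\!\left(a(k)\right)T^{k} \;=\; m\,P_{4}(a).
\]
Then I would invoke Theorem \ref{theorem2.15}, which asserts that $P_{4}(a)$ is the sum of the contributions
\[
T^{\frac{n_{i}+2}{3}}\!\left((2r_{i}-s_{i})+(r_{i}+s_{i})T\right),\quad T^{\frac{n_{i}+1}{3}}\!\left((r_{i}-s_{i})+(2r_{i}+s_{i})T\right),\quad T^{\frac{n_{i}}{3}}\!\left(-s_{i}+(3r_{i}+s_{i})T+B_{i}(1-T)^{2}\right),
\]
according as $n_{i}\equiv 1,2,0 \pod{3}$. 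Multiplying each of these contributions by the scalar $m$ distributes through every coefficient ($r_{i}\mapsto mr_{i}$, $s_{i}\mapsto ms_{i}$, $B_{i}\mapsto mB_{i}$) and yields exactly the three cases listed in the statement of Theorem \ref{theorem3.3}.

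There is essentially no obstacle here beyond bookkeeping: the hypothesis $r>1$ is inherited from Theorem \ref{theorem3.1} (which is what Corollary \ref{corollary3.2} rests on), and the hypothesis $r>1$ with aperiodic $a$ is precisely what Section \ref{section2} assumes, so Theorem \ref{theorem2.15} applies without any additional work. The only mild check worth making explicit is that the quantities $r_{i}$, $s_{i}$, $B_{i}$ used in the statement of Theorem \ref{theorem3.3} really are the same ones attached to the cycle $a$ in Definitions \ref{def2.1}--\ref{def2.8}, which is true by construction since $W=\newb(a,m,\lambda)$ and the combinatorial data come from $a$ alone.
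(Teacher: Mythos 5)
Your proposal is correct and coincides with the paper's own derivation: the paper obtains Theorem \ref{theorem3.3} exactly by combining Corollary \ref{corollary3.2} with Theorem \ref{theorem2.15} (its entire ``proof'' is the phrase ``Applying Theorem \ref{theorem2.15} we find''), which is precisely the substitution and scaling by $m$ that you carry out.
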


We now make use of the following key result of Burban \cite{4}: if $W=\newb(a,m,\lambda)$ then $W^{[q]}$ is isomorphic to $\newb(qa,m,\lambda^{q})$ where $qa$ is obtained from $a$ by multiplying each cycle entry, $a_{i}$, by $q$.

\begin{theorem}
\label{theorem3.4}
Let $W$ be a locally free sheaf on $C$. Suppose the pull-back of $W$ to $X=\newp^{1}$ is the direct sum of $\left(O_{X}(-n_{i})\right)^{r_{i}}$ where the $n_{i}$ are distinct and each $r_{i}>0$. Then one can assign to each $n_{i}$ an $s_{i}$ (with $|s_{i}|\le r_{i}$), and to each $n_{i}\equiv 0\pod{3}$ a $B_{i}$, so that the following holds:

For each $q$ (when $p=3$, for each $q>1$), $(1-T)^{-1}\poin\left(W^{[q]}\right)$ is the sum of the following contributions, one for each $n_{i}$:
\begin{eqnarray*}
\hspace{.9in}T^{\frac{qn_{i}+2}{3}}((2r_{i}-s_{i})+(r_{i}+s_{i})T)\qquad&\mathrm{ if }& qn_{i}\equiv 2\pod{3}\\
T^{\frac{qn_{i}+1}{3}}((r_{i}-s_{i})+(2r_{i}+s_{i})T)\qquad&\mathrm{ if }& qn_{i}\equiv 1\pod{3}\\
T^{\frac{qn_{i}}{3}}\left(-s_{i}+(3r_{i}+s_{i})T+B_{i}(1-T)^{2}\right)\qquad&\mathrm{ if }& qn_{i}\equiv 0\pod{3}
\end{eqnarray*}%
\end{theorem}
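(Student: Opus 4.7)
The plan is to reduce to the indecomposable case via Krull--Schmidt, apply Burban's Frobenius formula $\newb(a,m,\lambda)^{[q]}\cong\newb(qa,m,\lambda^{q})$ to each summand, and then combine Theorem~\ref{theorem3.3} (for summands of cycle length $>1$) with a direct calculation for length-one cycles.

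First decompose $W\cong\bigoplus_{j}\newb(a^{(j)},m_{j},\lambda_{j})$. The hypothesis that the pull-back to $X=\newp^{1}$ is $\bigoplus_{i}\left(\newo_{X}(-n_{i})\right)^{r_{i}}$ translates, via summary statement (2) of Section~\ref{section3}, into $r_{i}$ being the total weighted count (weights $m_{j}$) of occurrences of $-n_{i}$ across all $a^{(j)}$. Define $s_{i}$ and $B_{i}$ (the latter when $n_{i}\equiv 0\pod{3}$) by aggregating, with the same weights $m_{j}$, the invariants $s^{(j)}_{i}, B^{(j)}_{i}$ of Definition~\ref{def2.8} attached to entry $-n_{i}$ in the cycle $a^{(j)}$. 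Length-one cycles are handled by convention: such a summand $\newb((-n_{i}),m_{j},\lambda_{j})$ contributes $0$ to $s_{i}$, and contributes $m_{j}$ to $B_{i}$ (when $n_{i}\equiv 0\pod{3}$) exactly when the summand is isomorphic to a twist of $\newo_{C}$.

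Now invoke Burban's formula, so that $W^{[q]}\cong\bigoplus_{j}\newb(qa^{(j)},m_{j},\lambda_{j}^{q})$. Since $q>0$ preserves the ordering of $\newz$, multiplication by $q$ on cycle entries preserves aperiodicity, the block decomposition, and all $\varepsilon$-values; hence the invariants of $qa^{(j)}$ attached to entry $-qn_{i}$ agree with those of $a^{(j)}$ attached to entry $-n_{i}$. Moreover $\lambda\mapsto\lambda^{q}$ is injective on $F^{*}$ (the multiplicative group has no $p$-torsion in characteristic $p$), so length-one summands of $W^{[q]}$ detect twists of $\newo_{C}$ correctly; thus the same $s_{i}, B_{i}$ remain valid for $W^{[q]}$, merely relocated to the entries $-qn_{i}$. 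Applying Theorem~\ref{theorem3.3} to each summand with $r^{(j)}>1$ yields a sum of contributions of precisely the three shapes stated (with the trichotomy determined by $qn_{i}\bmod 3$); for length-one summands, Example~\ref{example1.3}(c) (extended by $m$-fold additivity in the fat case) shows that their contributions match the same three shapes with $s=0$ and $B\in\{0,m_{j}\}$. Summing and regrouping by the distinct $-qn_{i}$ gives the theorem, and the bound $|s_{i}|\le r_{i}$ is inherited from the same bound on each summand.

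The main obstacle is the length-one, multiplicity-$m>1$ indecomposable $\newb((k),m,\lambda)$, which falls outside Theorem~\ref{theorem3.3}. One handles it by exhibiting a filtration whose subquotients are all isomorphic to the line bundle $\newb((k),1,\lambda)$ and applying a Lemma~\ref{lemma1.4}-style additivity of Poincar\'e series (equivalently, additivity of $h^{0}$ for short exact sequences with the appropriate degree bounds). This additivity relies on the subquotient not being isomorphic to $\newo_{C}(k/3)$; the exclusion ``when $p=3$, for each $q>1$'' accommodates the boundary case where Frobenius cannot yet be invoked to resolve this, namely $q=1$ in characteristic $3$ with $k\equiv 0\pod{3}$, in which the fat length-one summand can temporarily obstruct the clean additivity.
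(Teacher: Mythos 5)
Your overall strategy coincides with the paper's: reduce to indecomposables by Krull--Schmidt, apply Burban's formula $\newb(a,m,\lambda)^{[q]}\cong\newb(qa,m,\lambda^{q})$, observe that multiplication by $q$ preserves blocs and $\varepsilon$-values so that Theorem \ref{theorem3.3} handles every summand of cycle length $>1$, and treat length-one cycles by a filtration with line-bundle quotients. However, your treatment of the length-one case contains a genuine error. For a fat summand $\newb((-n_{1}),m,\lambda)$ with $m>1$, $\lambda=1$ and $3\mid qn_{1}$, the correct contribution to $B_{1}$ is $1$, not $m_{j}$ (and not $0$, which is what your phrase ``exactly when the summand is isomorphic to a twist of $\newo_{C}$'' literally forces, since a rank-$m>1$ sheaf is never such a twist). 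Indeed, writing the relevant twist of this summand as the iterated self-extension $F_{m}$ of $\newo_{C}$, one has $h^{0}(F_{m})=1$: in $0\to\newo_{C}\to F_{2}\to\newo_{C}\to 0$ the connecting map $H^{0}(\newo_{C})\to H^{1}(\newo_{C})$ is cup product with the nonzero extension class, so it is injective, and inductively $h^{0}(F_{m})=1$. Comparing constant terms in $T^{0}\bigl(-s_{1}+(3r_{1}+s_{1})T+B_{1}(1-T)^{2}\bigr)$ then gives $B_{1}=h^{0}=1$. The paper's proof records exactly this: $s_{1}=0$, and $B_{1}=1$ if $\lambda=1$, $B_{1}=0$ otherwise.

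Your attempt to absorb the failure of $h^{0}$-additivity into the exclusion ``when $p=3$, for each $q>1$'' does not work: the problematic configuration ($\lambda=1$ with line-bundle subquotient isomorphic to some $\newo_{C}(k)$) persists for \emph{every} $q$, not just $q=1$ in characteristic $3$. For instance with $p=2$ and $W=\newb((0),2,1)$ one has $W^{[q]}\cong W$ for all $q$, and additivity of $h^{0}$ along the filtration fails for all $q$ alike. So the length-one, $\lambda=1$ case must be settled by the direct computation above rather than by Lemma \ref{lemma1.4}-style additivity; with that correction (contribution $1$ to $B_{i}$, independent of $m$), the rest of your argument goes through and matches the paper's.
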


\begin{proof}
It suffices to prove the result for indecomposable $W$. So we may assume that $W$ is $\newb(a,m,\lambda)$. Suppose first that the length of the cycle $a$ is $>1$. Then $W^{[q]}$ is isomorphic to $\newb\left(qa,m,\lambda^{q}\right)$; furthermore the pull-back of $W^{[q]}$ to $X=\newp^{1}$ is the direct sum of the $\left(O_{X}(-qn_{i}\right)^{mr_{i}}$.

Now replace $W$ by $W^{[q]}$ in Theorem \ref{theorem3.3}. The effect of this is to replace $n_{i}$ by $qn_{i}$ and leave $m$ unchanged. The result we desire would follow if we could show that the $s_{i}$ and $B_{i}$ attached to the cycle $qa$ and its cycle entry $-qn_{i}$ are independent of the choice of $q$ (when $p=3$ we need to show that this independence holds for $q\ge 3$). But as there is an obvious $1$ to $1$ correspondence between the blocs of $a$ and the blocs of $qa$, and this correspondence preserves $\varepsilon$, this is clear.

When the cycle $a$ consists of a single entry, $-n_{1}$, we can make a much simpler argument  In this case $W$ has a filtration with $m$ isomorphic quotients, each a line bundle of degree $-n_{1}$, and it's easy to calculate $(1-T)^{-1}\poin\left(W^{[q]}\right)$. Now $r_{1}=m$, and we find that Theorem \ref{theorem3.4} holds for $W$ with $s_{1}=0$, and when $n_{1}\equiv 0\pod{3}$, $B_{1}=1$ if $\lambda =1$ and $B_{1}=0$ otherwise.
\end{proof}

Suppose now that $W$ is the kernel bundle attached to an ideal $J$ and generators $g_{1},\ldots, g_{s}$ of $J$. Let $d_{i}=\deg g_{i}$, and set $e_{n}=\dim A/\left(J^{[q]},h\right)$ where $q=p^{n}$. Theorem \ref{theorem3.4} attaches to $W$ certain integers $n_{i}$, $r_{i}$, $s_{i}$ and $B_{i}$. We'll use the argument given at the end of section \ref{section1} to express each $e_{n}$ (when $p=3$, each $e_{n}$ with $n>0$) in terms of $n_{i}$, $r_{i}$, $s_{i}$, $B_{i}$ and $\sum d_{i}^{2}$.

\begin{definition}
\label{def3.5}
$\mu=\frac{1}{6}\sum r_{i}n_{i}^{2} - \frac{3}{2}\sum d_{i}^{2}$, $\alpha = \frac{1}{3}\sum s_{i}n_{i}$.
\end{definition}

The general result of Brenner \cite{1} concerning Hilbert-Kunz multiplicities in graded dimension $2$ shows that $e_{n}=\mu q^{2}+O(q)$. We'll show that when $p=3$ (and $n>0$) $e_{n}=\mu q^{2}+\alpha q -R$ for constant $R$. And when $p\ne 3$, $e_{n}=\mu q^{2}+\alpha q -R(q)$ where $R(q)$ only depends on $q\mod 3$.

\begin{theorem}
\label{theorem3.6}
Suppose $p=3$. Let $R=\sum(r_{i}-B_{i})$. Then for $n>0$, $e_{n}=\mu q^{2}+\alpha q-R$.
\end{theorem}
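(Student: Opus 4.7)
The strategy is to apply the ``second derivative at $T=1$'' extraction trick from the end of Section \ref{section1} in full generality, using Theorem \ref{theorem3.4} to write out $(1-T)^{-1}\poin(W^{[q]})$ explicitly, and then to collapse the resulting quadratic in $q$ down to $\mu q^{2} + \alpha q - R$ by using the rank and degree of the kernel bundle $W$.

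Concretely, set $u_{n}(T) = (1-T)^{-1}\poin(W^{[q]})$ and rewrite Lemma \ref{lemma1.5}(2) as
\[
Q(T) := (1-T)^{2}\hilb(A/(J^{[q]},h)) = (1+T+T^{2})\Bigl(1-\sum T^{qd_{i}}\Bigr) + u_{n}(T).
\]
Since $A/(J^{[q]},h)$ is finite-dimensional, $\hilb$ is a polynomial, so $(1-T)^{2}$ divides $Q$ and hence $e_{n} = \tfrac{1}{2}Q''(1)$. Because $p = 3$ and $n > 0$, $3 \mid q$, so $qn_{i} \equiv 0 \pmod{3}$ for every $i$, and Theorem \ref{theorem3.4} reduces to
\[
u_{n}(T) = \sum_{i} T^{qn_{i}/3}\bigl(-s_{i} + (3r_{i}+s_{i})T + B_{i}(1-T)^{2}\bigr).
\]
A routine Leibniz calculation then yields $u_{n}''(1) = \sum_{i}(\tfrac{1}{3}r_{i}q^{2}n_{i}^{2} + r_{i}qn_{i} + \tfrac{2}{3}s_{i}qn_{i} + 2B_{i})$, together with $[(1+T+T^{2})(1-\sum T^{qd_{i}})]''(1) = 2 - 2s - 3q\sum d_{i} - 3q^{2}\sum d_{i}^{2}$. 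Adding and halving expresses $e_{n}$ as a quadratic in $q$ whose $q^{2}$-coefficient is visibly $\tfrac{1}{6}\sum r_{i}n_{i}^{2} - \tfrac{3}{2}\sum d_{i}^{2} = \mu$.

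For the linear and constant coefficients I would invoke two invariants of $W$: from the defining sequence $0 \to W \to \oplus O_{C}(-d_{i}) \to O_{C} \to 0$, $W$ has rank $s - 1$ and degree $-3\sum d_{i}$, and pullback to the normalization $X = \newp^{1}$ preserves both, giving $\sum r_{i} = s - 1$ and $\sum r_{i} n_{i} = 3\sum d_{i}$. The second identity collapses the linear-in-$q$ piece $\tfrac{1}{2}\sum r_{i}n_{i} + \tfrac{1}{3}\sum s_{i}n_{i} - \tfrac{3}{2}\sum d_{i}$ to $\tfrac{1}{3}\sum s_{i}n_{i} = \alpha$, and the first collapses the constant piece $\sum B_{i} + 1 - s$ to $\sum B_{i} - \sum r_{i} = -R$.

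Once Theorem \ref{theorem3.4} is available the calculation is almost entirely bookkeeping; the main thing to get right is the degree identity $\sum r_{i} n_{i} = 3\sum d_{i}$, which is the key constraint that turns the raw quadratic into the advertised closed form. As a sanity check, the relations $Q(1) = 0 = Q'(1)$ forced by divisibility of $Q$ by $(1-T)^{2}$ follow from these same two identities plus $\sum_{i} s_{i} = 0$, the last holding by Remark \ref{remark2.6} applied summand-by-summand to the indecomposable decomposition of $W$.
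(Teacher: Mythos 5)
Your proposal is correct and follows essentially the same route as the paper: apply Lemma \ref{lemma1.5}(2), extract $e_{n}=\tfrac12 Q''(1)$, use Theorem \ref{theorem3.4} with every $qn_{i}\equiv 0\pmod 3$, and collapse the quadratic via $\sum r_{i}=s-1$ and $\sum r_{i}n_{i}=3\sum d_{i}$ (the paper substitutes these rank/degree identities into $v_{n}''(1)$ up front rather than at the end, but the computation is identical). The concluding sanity check is extra but sound.
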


\begin{proof}
Let $u_{n}=(1-T)^{-1}\poin\left(W^{[q]}\right)$ and $v_{n}=\left(1\! +T\! +T^{2}\right)\cdot\left(-1\! +\! \sum T^{d_{i}q}\right)$. As we saw in section \ref{section1}, $2e_{n}=u_{n}^{\prime\prime}(1)-v_{n}^{\prime\prime}(1)$; see Lemma \ref{lemma1.5} and the proof of Corollary \ref{corollary1.10}. Now $v_{n}^{\prime\prime}(1)=-2 + $ a sum of terms $(d_{i}q)(d_{i}q-1)+(d_{i}q+1)(d_{i}q)+(d_{i}q+2)(d_{i}q+1)$. Expanding we find that $v_{n}^{\prime\prime}(1)=\left(3\sum d_{i}^{2}\right)q^{2}+\left(3\sum d_{i}\right)q+2s-2$, where $s$ is the number of $q_{i}$. Since $W$ has degree $-\sum r_{i}n_{i}$ and rank $\sum r_{i}$ we find:

\begin{equation}\tag{*}
\label{star}
v_{n}^{\prime\prime}(1)=\left(3\sum d_{i}^{2}\right)q^{2}+\left(\sum r_{i}n_{i}\right)q+2\sum r_{i}
\end{equation}

Now as $p=3$ and $q>1$, each $qn_{i}\equiv 0 \pod{3}$. Theorem \ref{theorem3.4} then shows that $u_{n}$ is a sum of terms $T^{\frac{qn_{i}}{3}}\left(-s_{i}+\left(3r_{i}+s_{i}\right)T+B_{i}(1-T)^{2}\right)$. So $u_{n}^{\prime\prime}(1)$ is a sum of terms $\frac{qn_{i}}{3}\cdot\frac{qn_{i}-3}{3}\cdot(-s_{i})+\frac{qn_{i}+3}{3}\cdot\frac{qn_{i}}{3}\cdot\left(3r_{i}+s_{i}\right)+2B_{i}$. This term simplifies to $\frac{qn_{i}}{3}\left(qr_{i}n_{i}+3r_{i}+2s_{i}\right)+2B_{i}$, and so:

\begin{equation*}\tag{**}
\label{starstar}
u_{n}^{\prime\prime}(1)=\left(\frac{1}{3}\sum r_{i}n_{i}^{2}\right)q^{2}+\left(\sum r_{i}n_{i}\right)q+\left(\frac{2}{3}\sum s_{i}n_{i}\right)q+2\sum B_{i}.
\end{equation*}

Combining (\ref{star}) and (\ref{starstar}) we find that $2e_{n}=u_{n}^{\prime\prime}(1)-v_{n}^{\prime\prime}(1)=2\mu q^{2}+2\alpha q +2\sum\left(B_{i}-r_{i}\right)$, giving the theorem.
\end{proof}

\begin{theorem}
\label{theorem3.7}
Suppose $p\ne 3$. Set $$\textstyle R(q)=\sum_{qn_{i}\equiv 1\pod{3}}\left(\frac{2r_{i}-2s_{i}}{3}\right)+\sum_{qn_{i}\equiv 2\pod{3}}\left(\frac{2r_{i}-s_{i}}{3}\right)+\sum_{qn_{i}\equiv 0\pod{3}}\left(r_{i}-B_{i}\right).$$ Note that $R(q)$ only depends on $q\mod 3$. Then $e_{n}=\mu q^{2}+\alpha q-R(q)$.
\end{theorem}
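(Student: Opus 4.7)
The strategy is to parallel the proof of Theorem \ref{theorem3.6}. Let $u_n = (1-T)^{-1}\poin(W^{[q]})$ and $v_n = (1+T+T^2)(-1+\sum T^{d_iq})$; then Lemma \ref{lemma1.5} together with the derivation recalled at the end of Section \ref{section1} gives $2e_n = u_n''(1) - v_n''(1)$. The formula (\ref{star}), $v_n''(1) = (3\sum d_i^2)q^2 + (\sum r_in_i)q + 2\sum r_i$, does not use the hypothesis $p = 3$, so it remains valid verbatim in the present setting.

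The real work is the computation of $u_n''(1)$ via Theorem \ref{theorem3.4}, where the contribution from each $n_i$ now falls into one of three cases depending on $qn_i \bmod 3$. Using the elementary identities $(T^a(b+cT))''|_{T=1} = a((b+c)a + (c-b))$ and $(T^a(1-T)^2)''|_{T=1} = 2$, one expands each case explicitly. In all three cases the polynomial-in-$q$ part of the result simplifies to the common expression
\[\frac{r_iq^2n_i^2}{3} + r_iqn_i + \frac{2s_iqn_i}{3},\]
while the residual constant term depends on the case: it is $\tfrac{2r_i+4s_i}{3}$ when $qn_i\equiv 1\pod{3}$, $\tfrac{2r_i+2s_i}{3}$ when $qn_i\equiv 2\pod{3}$, and $2B_i$ when $qn_i\equiv 0\pod{3}$ (the last case is already carried out in the proof of Theorem \ref{theorem3.6}).

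Summing over $i$ yields $u_n''(1) = \left(\tfrac{1}{3}\sum r_in_i^2\right)q^2 + \left(\sum r_in_i\right)q + \left(\tfrac{2}{3}\sum s_in_i\right)q + C$, where $C$ is the sum of the residual constants. Combining with (\ref{star}) and dividing by $2$ gives $e_n = \mu q^2 + \alpha q + \tfrac{1}{2}(C - 2\sum r_i)$, so it remains to match $\sum r_i - C/2$ with the $R(q)$ in the statement. Splitting $\sum r_i$ according to the residue of $qn_i \bmod 3$ and using the identities $r_i - \tfrac{r_i+2s_i}{3} = \tfrac{2r_i-2s_i}{3}$ and $r_i - \tfrac{r_i+s_i}{3} = \tfrac{2r_i-s_i}{3}$ produces exactly the claimed formula for $R(q)$.

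The argument is thus a straightforward trichotomous extension of Theorem \ref{theorem3.6}, and the only ``obstacle'' is the bookkeeping needed to collate the three cases and verify that the $q$-polynomial parts really do match across cases (this is the crucial cancellation that lets one aggregate them via $\mu$ and $\alpha$). I note also that, since Theorem \ref{theorem3.4} is unconditional when $p \ne 3$, the resulting formula applies for every $n \ge 0$ (including $q = 1$), without the $n > 0$ caveat required in Theorem \ref{theorem3.6}.
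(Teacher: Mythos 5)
Your proposal is correct and follows essentially the same route as the paper's own proof: the identity $2e_{n}=u_{n}''(1)-v_{n}''(1)$, the unchanged formula (\ref{star}), the three-way case analysis of the contribution from each $n_{i}$ according to $qn_{i}\bmod 3$ with the same residual constants $\frac{2r_{i}+4s_{i}}{3}$, $\frac{2r_{i}+2s_{i}}{3}$, $2B_{i}$, and the same final bookkeeping yielding $R(q)$. The only cosmetic difference is that you package the derivative computations in the general identity $(T^{a}(b+cT))''|_{T=1}=a((b+c)a+(c-b))$ rather than expanding each term directly.
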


\begin{proof}
We argue as in the proof of Theorem \ref{theorem3.6}. (\ref{star}) remains valid, but now $u_{n}^{\prime\prime}(1)$ is a more complicated sum of terms. When $qn_{i}\equiv 0\pod{3}$, the term once again is $\frac{qn_{i}}{3}\left(qr_{i}n_{i}+3r_{i}+2s_{i}\right)+2B_{i}$. But when $qn_{i}\equiv 1\pod{3}$ this term is replaced by $\frac{qn_{i}+2}{3}\left(qr_{i}n_{i}+r_{i}+2s_{i}\right)$; that is to say by $\frac{qn_{i}}{3}\left(qr_{i}n_{i}+3r_{i}+2s_{i}\right)+\frac{2r_{i}+4s_{i}}{3}$. And when $qn_{i}\equiv 2\pod{3}$, it is replaced by $\frac{qn_{i}+1}{3}\left(qr_{i}n_{i}+2r_{i}+2s_{i}\right)$; that is to say by $\frac{qn_{i}}{3}\left(qr_{i}n_{i}+3r_{i}+2s_{i}\right)+\frac{2r_{i}+2s_{i}}{3}$.  So:
\begin{equation*}
u_{n}^{\prime\prime}(1)=\left(\frac{1}{3}\sum r_{i}n_{i}^{2}\right)q^{2}+\left(\sum r_{i}n_{i}\right)q +\hspace{-1em}\sum_{qn_{i}\equiv 1\pod{3}}\hspace{-1em}\frac{2r_{i}+4s_{i}}{3}+\hspace{-1em}\sum_{qn_{i}\equiv 2\pod{3}}\hspace{-1em}\frac{2r_{i}+2s_{i}}{3}+2\sum B_{i}.
\end{equation*}

Combining the above result with (\ref{star}) we find that $2e_{n}=u_{n}^{\prime\prime}(1)-v_{n}^{\prime\prime}(1)=2\mu q^{2}+2\alpha q +\sum_{qn_{i}\equiv 1\pod{3}}\frac{4s_{i}-4r_{i}}{3}+\sum_{qn_{i}\equiv 2\pod{3}}\frac{2s_{i}-4r_{i}}{3}+2\sum_{qn_{i}\equiv 0\pod{3}} \left(B_{i}-r_{i}\right)=2\mu q^{2}+2\alpha q-2R(q)$.
\end{proof}

Theorems \ref{theorem3.6} and \ref{theorem3.7} differ from similar results in \cite{2} and \cite{7} in that they allow practical calculation of all the $e_{n}$ (The eventually periodic terms that occur in the results of \cite{2} and \cite{7} arise from dynamical systems acting on the rational points of certain moduli spaces --- in practice they cannot be calculated.) The following examples show how easy it is to apply Theorems \ref{theorem3.6} and \ref{theorem3.7}.

\begin{example}
\label{example3.8}
Suppose $p=2$ and $h=x^{3}+y^{3}+xyz$. Let $J$ be generated by $g_{1},\ldots, g_{8}$ where the $g_{i}$ are $x^{3}$, $y^{3}$, $z^{3}$, $x^{2}y$, $x^{2}z$, $xz^{2}$, $y^{2}z$ and $yz^{2}$. If $W$ is the kernel bundle arising from these $g_{i}$, then $(1-T)^{-1}\poin\left(W^{[8]}\right)=(1-T)^{-1}\poin\left(A/\left(J^{[8]},h\right)\right)-\left(1-T^{3}\right)\left(1-8T^{24}\right)$. This is calculated immediately using Macaulay 2 which shows: 
\begin{equation*}
(1-T)^{-1}\poin\left(W^{[8]}\right)=3T^{27}+12T^{28}+6T^{30}=T^{27}(3+12T)+T^{30}(6+0T).
\end{equation*}

We'll use this information to determine all the $e_{n}$.

\begin{enumerate}
\item[(a)]$n_{1}=\lfloor\frac{3\cdot 27}{8}\rfloor = 10$\qquad$n_{2}=\lfloor\frac{3\cdot 30}{8}\rfloor = 11$
\item[(b)] Since $8n_{1}\equiv 2\pod{3}$, $r_{1}-s_{1}=3$ and $2r_{1}+s_{1}=12$. It follows that $r_{1}=5$, $s_{1}=2$. Similarly, since $8n_{2}\equiv 1\pod{3}$,  $2r_{2}-s_{2}=6$ and $r_{2}+s_{2}=0$. So $r_{2}=2$, $s_{2}=-2$.
\item[(c)]$\mu = \frac{1}{6}(5\cdot 100+2\cdot 121)-\frac{3}{2}\left(\sum_{1}^{8}9\right)=\frac{47}{3}$\\
$\alpha=\frac{1}{3}(2\cdot 10 - 2\cdot 11)=-\frac{2}{3}$
\item[(d)]Since $n_{1}\equiv 1\pod{3}$ and $n_{2}\equiv 2\pod{3}$,\\ 
$R(1) = \frac{2r_{1}-2s_{1}}{3}+\frac{2r_{2}-s_{2}}{3} = \frac{6}{3}+\frac{6}{3} = 4$\\
$R(2) = \frac{2r_{1}-s_{1}}{3}+\frac{2r_{2}-2s_{2}}{3} = \frac{8}{3}+\frac{8}{3} = \frac{16}{3}$
\end{enumerate}

Theorem \ref{theorem3.7} now tells us that $e_{n}=\frac{47}{3}q^{2}-\frac{2}{3}q-4$ for even $n$ and $\frac{47}{3}q^{2}-\frac{2}{3}q-\frac{16}{3}$ for odd $n$. 
\end{example}

\begin{example}
\label{example3.9}
Take the $g_{i}$ and $h$ as in the above example but with $p=3$. Now Macaulay 2 gives:
\begin{eqnarray*}
(1-T)^{-1}\poin\left(W^{[9]}\right)&=&13T^{31}+2T^{32}+2T^{33}+4T^{34}\\
&=& T^{30}(0+13T+2T^{2})+T^{33}(2+4T+0T^{2})
\end{eqnarray*}

It follows that $n_{1}=\frac{30\cdot 3}{9}=10$, and we find that $r_{1}=5$, $s_{1}=2$, $B_{1}=2$. Similarly, $n_{2}=\frac{33\cdot 3}{9}=11$, and $r_{2}=2$, $s_{2}=-2$, $B_{2}=0$. The $\mu$ and $\alpha$ are once again $\frac{47}{3}$ and $-\frac{2}{3}$, but now $R=(5-2)+(2-0)=5$. We conclude from Theorem \ref{theorem3.6} that $e_{n}=\frac{47}{3}q^{2}-\frac{2}{3}q-5$ for $n>0$.
\end{example}


\label{}



\end{document}